\numberwithin{equation}{section}
\newtheorem{theorem}{Theorem}[section]
\newtheorem{lemma}{Lemma}[section]
\newtheorem{conjecture}{Conjecture}[section]
\newenvironment{proof}[1][Proof]{\begin{trivlist}
\item[\hskip \labelsep {\bfseries #1}]}{\end{trivlist}}
\begin{document}
\title {A Derivative-Hilbert operator acting on Hardy spaces\footnote{ The research was
supported by
	the National Natural Science Foundation of China (Grant No. 11671357)}}
\author{Shanli Ye\footnote{Corresponding author. E-mail address:  slye@zust.edu.cn}
\quad\quad Guanghao Feng\footnote{E-mail address:gh945917454@foxmail.com}     \\
(\small \it School of Sciences, Zhejiang University of Science and Technology, Hangzhou
310023, China, P. R. China$)$}

 \date{}
\maketitle
\begin{abstract}

Let $\mu$ be a positive Borel measure on the interval $[0,1)$. The Hankel matrix
$\mathcal{H}_\mu= (\mu_{n,k})_{n,k\geq0}$ with entries $\mu_{n,k}= \mu_{n+k}$, where
$\mu_n=\int_{ [0,1)}t^nd\mu(t)$, induces formally the operator
$$\mathcal{DH}_\mu(f)(z)=\sum_{n=0}^\infty (\sum_{k=0}^\infty \mu_{n,k}a_k)(n+1)z^n$$
on the space of all analytic function $f(z)=\sum_{k=0}^ \infty a_k z^n$   in the unit disc
$\mathbb{D}$.
We characterize those positive Borel  measures on $[0,1)$ such that
$\mathcal{DH}_\mu(f)(z)= \int_{[0,1)} \frac{f(t)}{{(1-tz)^2}} d\mu(t)$ for all in Hardy
spaces $H^p(0<p<\infty)$, and among them we describe those for which  $\mathcal{DH}_\mu$ is
a bounded(resp.,compact) operator from $H^p(0<p <\infty)$ into $H^q(q > p$ and $q\geq 1$).
 We also study the analogous problem in Hardy spaces $H^p(1\leq p\leq 2)$.
\\
{\small\bf Keywords}\quad {Derivative-Hilbert operator, Hardy spaces, Carleson measure
 \\
    {\small\bf 2020 MR Subject Classification }\quad 47B35, 30H20, 30H30\\}
\end{abstract}
\maketitle

\section{Introduction}\label{s1}  %注入一级标题示例,大括号内替换文章的一级标题

Let $\mu$ be a positive Borel measure on the interval $[0,1)$. The Hankel matrix
$\mathcal{H}_\mu= (\mu_{n,k})_{n,k\geq0}$ with entries $\mu_{n,k}= \mu_{n+k}$, where
$\mu_n=\int_{ [0,1)}t^nd\mu(t)$, For analytic functions $f(z)=\sum_{k=0}^ \infty a_k z^n$ ,
the generalized Hilbert operator define as
$$\mathcal{H}_\mu(f)(z)=\sum_{n=0}^\infty (\sum_{k=0}^\infty \mu_{n,k}a_k)z^n.$$
whenever the right hand side makes sense and defines an analytic function in $\mathbb{D}$.

In recent decades, the generalized Hilbert operator $\mathcal{H}_\mu$ which induced by the
Hankel matrix $\mathcal{H}_\mu$ have been studied extensively. For example, Galanopoulos and
Pel\'{a}ez \cite{9} characterized the Borel measure $\mu$ for which the Hankel operator
$\mathcal{H}_\mu$ is a bounded (resp.,compact) operator on $H^1$. Then Chatzifountas, Girela
and Pel\'{a}ez \cite{3} extended this work the all Hardy spaces $H^p$ with $0<p< \infty$. In
\cite{10}, Girela and Merch\'{a}n studied the operator acting on certain conformally invariant
spaces of analytic functions on the disk.

In 2021, Ye and Zhou \cite{18} firstly used the Hankel matrix defined the Derivative-Hilbert
operator $\mathcal{DH}_\mu$ as
\begin{align}\label{1.1}
\mathcal{DH}_\mu(f)(z)=\sum_{n=0}^\infty (\sum_{k=0}^\infty \mu_{n,k}a_k)(n+1)z^n.
\end{align}

Another generalized Hilbert-integral operator related to $\mathcal{DH}_\mu$ denoted by
$\mathcal{I}_{\mu_\alpha} (\alpha \in \mathbb{N}^+$) is defined by
$$ \mathcal{I}\mu_\alpha(f)(z)=\int_{[0,1)} \frac{f(t)}{{(1-tz)^\alpha}} d\mu(t).$$
whenever the right hand side makes sense and defines an analytic function in $\mathbb{D}$.
We can easily see that the case $\alpha=1$ is the integral representation of the generalized
Hilbert operator. Ye and Zhou characterized the measure $\mu$ for which $ \mathcal{I}\mu_2$
and $\mathcal{DH}_\mu$ is bounded (resp.,compact) on Bloch space \cite{18} and Bergman spaces
\cite{17}.

In this paper, we consider the operators
\begin{align}
   & \mathcal{DH}_\mu,\mathcal{I}_{\mu_2}: H^p\rightarrow H^q,  \quad 0 <p< \infty, \quad
   q\geq p
   \; and \; q\geq 1. \notag \\
   & \mathcal{DH}_\mu,\mathcal{I}_{\mu_2}: H^p\rightarrow B_q,  \quad 0<p \leq 1, \quad
   0<q<1. \notag
\end{align}
The aim is to study the boundedness (resp.,compactness) of $\mathcal{I}_{\mu_2}$ and
$\mathcal{DH}_\mu$.

In this article we characterize the positive Borel measures $\mu$ for which the operator
which $ \mathcal{I}_{\mu_2}$ and $\mathcal{DH}_\mu$ is well defined in the Hardy spaces
$H^p$. Then we give the necessary and sufficient conditions  such that operator
$\mathcal{DH}_\mu$ is bounded (resp.,compact) from the Hardy space $H^p(0<p< \infty)$ into
the space $H^q (q \geq p$ and $q \geq 1) $, or from $H^p(0 < p \leq 1)$ into $B_q(0<q<1)$.

\section{Notation and Preliminaries}\label{s2}
 Let $\mathbb{D}$ denote the open unit disk of the complex plane, and let $H(\mathbb{D})$ denote the set of all analytic functions in $\mathbb{D}$.

If $0<r<1$ and $f\in H(\mathbb{D})$, we set
\begin{align}
   & M_p (r,f)=\left( \frac{1}{2\pi} \int_0^{2\pi} |f(re^{i \theta})|^p d\theta \right)^\frac{1}{p}, \quad 0<p<\infty.  \notag\\
   & M_\infty (r,f)=\sup_{|z|=r}|f(z)|.\notag
\end{align}

For $0<p\leq \infty$, the Hardy space $H^p$ consists of those $f \in H(\mathbb{D})$ such that
$$||f||_{H^p} \overset{def}{=} \sup_{0<r<1}M_p(r,f)<\infty.$$

We refer to \cite{6} for the notation and results regarding Hardy spaces.

For $0 < q < 1$, we let $B_q$ \cite{22} denote the space consisting of those $f \in H(\mathbb{D})$ for which
$$||f||_{B_q}=\int_0^1 (1-r)^{1/q-2}M_1(r,f)dr <\infty.$$
The Banach space $B_q$ is the ``containing Banach space" of $H^q$, that is, $H^q$ is a dense subspace of $B_q$, and the two spaces have the same continuous linear functionals. (We mention \cite{22} as general references for the $B_q$ spaces.)

The space $BMOA$ consists of those function $f \in H^1$ whose boundary values have bounded mean oscillation on $\partial \mathbb{D}$ as defined by John and Niirenberg.  There are many characterizations of $BMOA$ functions. Let us mention the following.

 Let $\varphi_a(z)=\frac{a-z}{1-\overline{a}z}$ be a M$\ddot{o}$bius transformations. If $f$ is an analytic function in $\mathbb{D}$, then $f \in BMOA $ if and only if
$$||f||_{BMOA} \overset{def}{=} |f(0)|+||f||_*< \infty.$$
where
$$||f||_* \overset{def}{=}\sup_{a\in \mathbb{D}}||f\circ\varphi_a -f(a)||_{H^2}.$$
It is clear that the seminorm $||\;||_*$ is conformally invariant. If
$$\lim_{|a|\rightarrow 1}||f\circ\varphi_a -f(a)||_{H^2}=0,$$
then we say that $f$ belongs to the space VMOA (analytic functions of vanishing mean oscillation). We refer to \cite{19} for the theory of $BMOA$ functions.

Finally, we recall that a function $f\in H(\mathbb{D})$ is said to be a Bloch function if
$$||f||_\mathcal{B}\overset{def}{=} |f(0)|+\sup_{z\in \mathbb{D}}(1-|z|^2)|f'(z)|< \infty.$$
The space of all Bloch functions is denoted by $\mathcal{B}$. A classical reference for the theory of Bloch functions is \cite{11,14}. The relation between these spaces we introduced above is well known,
$$H^\infty \varsubsetneq BMOA \varsubsetneq \mathcal{B}, \quad BMOA \varsubsetneq \bigcap_{0<p<\infty} H^p.$$

Let us recall the knowledge of Carleson measure, which is a very useful tool in the study of Banach spaces of analytic functions. For $0<s<\infty$, a positive Borel measure $\mu$ on $\mathbb{D}$ will be called an $s$-Carleson measure if there exists a positive constant $C$ such that
$$\mu(S(I))\leq C|I|^s .$$
The Carleson square $S(I)$ is defined as
$$S(I)=\{z=re^{i\theta}:e^{i\theta}\in I; 1-\frac{|I|}{2\pi}\leq r \leq 1\}.$$
where $I$ is an interval of $\partial \mathbb{D}$ , $|I|$ denotes the length of $I$. If $\mu $ satisfies $\lim_{|I|\rightarrow 0} \frac{\mu(S(I)) }{|I|^s}=0$, we call $\mu$ is a vanishing $s$-Carleson measure.

Let $\mu$ be a positive Borel measure on $\mathbb{D}$. For $0\leq \alpha < \infty$ and $ 0<s< \infty $ we say that $\mu$ is $\alpha$-logarithmic $s$-Carleson measure,if there exists a positive constant $C$ such that
 $$\frac{\mu(S(I))(\log\frac{ 2\pi }{|I|})^\alpha}{|I|^s} \leq C \quad  \quad I \subset \partial \mathbb{D}.$$
If $\mu(S(I))(\log\frac{ 2\pi }{|I|})^\alpha=o(|I|^s)$ , as  $|I|\rightarrow 0$, we say that $\mu$ is vanishing $\alpha$-logarithmic $s$-Carleson measure\cite{13,15}.

Suppose $\mu$ is a $s$-Carleson measure on $\mathbb{D}$, we see that the identity mapping $i$ is well defined from $H^p$ into $L^q(\mathbb{D},\mu).$ Let $\mathcal{N}(\mu)$ be the norm of $i$. For $0<r<1$, let
$$d\mu_r(z)=\chi_{r<|z|<1}(t)d\mu(t).$$
Then $\mu$ is a vanishing $s$-Carleson measure if and only if
\begin{align}\label{2.1}
  \mathcal{N}(\mu_r) \rightarrow 0 \quad as  \quad r\rightarrow 1^-.
\end{align}

A positive Borel measure on [0, 1) also can be seen as a Borel measure on $\mathbb{D}$ by identifying it with the measure $\mu$ defined by
$$\tilde{\mu}(E)=\mu(E\bigcap [0,1)).$$
for any Borel subset E of $\mathbb{D}$. Then a positive Borel measure $\mu$ on [0,1) can be seen as an $s$-Carleson measure on $\mathbb{D}$, if
$$\mu([t,1)) \leq C(1-t)^s,  \quad 0 \leq t<1 .$$
Also, we have similar statements for vanishing $s$-Carleson measures, $\alpha$-logarithmic
$s$-Carleson and vanishing $\alpha$-logarithmic $s$-Carleson measures.

As usual, throughout this paper, $C$ denotes a positive constant which depends only on the displayed parameters but not necessarily the same from one occurrence to the next. For any given $p > 1$, $p'$ will denote the conjugate index of p, that is, $1/p + 1/p' = 1$.

\section{Conditions such that $\mathcal{DH}_\mu$ is well defined on Hardy spaces}\label{s3}
In this section, we find the sufficient condition such that $\mathcal{DH}_\mu$ are well defined
in $H^p(0<p< \infty)$ and obtain that $\mathcal{DH}_\mu(f)= \mathcal{I}_{\mu_2}(f)$, for all $f\in H^p$, with the certain condition.

We first recall two results about the coefficients of functions in Hardy spaces.
\begin{lemma}{\cite[p.98]{6}}\label{Le3.1} If
$$f(z)=\sum_{n=0}^\infty a_n z^n \in H^p, \quad 0<p\leq 1,$$
then
$$a_n=o(n^{1/p-1}),$$
and
$$ |a_n|\leq Cn ^{1/p-1}||f||_{H^p}.$$
\end{lemma}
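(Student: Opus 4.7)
The plan is to first establish the big-$O$ bound $|a_n|\le Cn^{1/p-1}\|f\|_{H^p}$ by combining a mean-growth estimate with Cauchy's coefficient formula, and then upgrade to the little-$o$ statement by polynomial approximation.

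The main ingredient I would invoke is the Hardy--Littlewood mean growth theorem: for $0<p<q\le\infty$ and $f\in H^p$,
$$M_q(r,f)\le C(1-r)^{1/q-1/p}\|f\|_{H^p}.$$
Specializing to $q=1$ and $0<p\le 1$ gives $M_1(r,f)\le C(1-r)^{1-1/p}\|f\|_{H^p}$. Since $a_n r^n=\frac{1}{2\pi}\int_0^{2\pi}f(re^{i\theta})e^{-in\theta}d\theta$, the trivial estimate $|a_n|\le r^{-n}M_1(r,f)$ combined with the previous line yields $|a_n|\le Cr^{-n}(1-r)^{1-1/p}\|f\|_{H^p}$. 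Choosing $r=1-1/(n+1)$ keeps $r^n$ bounded (by $e$) and makes $(1-r)^{1-1/p}=(n+1)^{1/p-1}$, producing the claimed inequality $|a_n|\le Cn^{1/p-1}\|f\|_{H^p}$.

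For the little-$o$ refinement I would exploit that polynomials are dense in $H^p$. Given $\varepsilon>0$, pick a polynomial $P$ with $\|f-P\|_{H^p}<\varepsilon$. For every $n>\deg P$ the $n$-th Taylor coefficient of $f$ coincides with that of $f-P$, so applying the big-$O$ bound just derived to $f-P$ gives $n^{1-1/p}|a_n|\le C\varepsilon$ for all sufficiently large $n$. Letting $\varepsilon\to 0$ yields $a_n=o(n^{1/p-1})$.

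The only substantive ingredient is the Hardy--Littlewood mean-growth estimate; once that is granted the remaining steps amount to Cauchy's integral formula plus density. I therefore expect no serious obstacle, as the result is classical and is exactly the content of Duren's \emph{Theory of $H^p$ Spaces} around Theorems 5.9 and 6.4, which is the reference cited in the lemma.
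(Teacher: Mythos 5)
Your proof is correct, and it is essentially the classical argument behind the result the paper simply cites from Duren (p.\ 98): the coefficient bound $|a_n|r^n\le M_1(r,f)\le C(1-r)^{1-1/p}\|f\|_{H^p}$ with $r=1-1/(n+1)$ gives the big-$O$ estimate, and your density step (or, alternatively, using the little-$o$ form $M_1(r,f)=o((1-r)^{1-1/p})$ of the Hardy--Littlewood mean-growth theorem directly) yields $a_n=o(n^{1/p-1})$. No gaps; this matches the cited source's approach.
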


\begin{lemma}{\cite[p.95]{6}}\label{Le3.2} If
$$f(z)=\sum_{n=0}^\infty a_n z^n \in H^p, \quad 0<p\leq 2,$$
then $\sum n^{p-2}|a_n|^p < \infty$ and
$$ \left\{ \sum_{n=0}^\infty(n+1)^{p-2}|a_n|^p \right\}^{1/p} \leq C ||f||^p_{H^p}.$$

\end{lemma}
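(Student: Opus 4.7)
The plan is to prove the Hardy-Littlewood coefficient inequality by splitting into cases according to the value of $p$. At the endpoint $p=2$, Parseval's identity immediately gives $\sum_{n\geq 0}|a_n|^2=||f||_{H^2}^2$; since $(n+1)^{p-2}=1$ at $p=2$, this is exactly the claim with sharp constant. At $p=1$, I would invoke Hardy's classical inequality $\sum_{n\geq 0}\frac{|a_n|}{n+1}\leq \pi||f||_{H^1}$, which is precisely the statement in this case because $(n+1)^{p-2}=(n+1)^{-1}$ when $p=1$.

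For the remaining range $0<p<2$, my preferred strategy combines inner-outer factorization with a dyadic Littlewood-Paley decomposition. Writing $f=Bg$ with $B$ a Blaschke product and $g$ zero-free and outer, one has $||g||_{H^p}=||f||_{H^p}$, and the function $h:=g^{p/2}$ is then a well-defined analytic function in $\mathbb{D}$ lying in $H^2$ with $||h||_{H^2}^2=||g||_{H^p}^p=||f||_{H^p}^p$. Parseval on $h=\sum b_n z^n$ then yields $\sum_n|b_n|^2=||f||_{H^p}^p$, giving sharp $\ell^2$ control over the coefficients of $h$; but these are \emph{not} the coefficients $a_n$ of $f=Bh^{2/p}$, because the map $h\mapsto h^{2/p}$ is nonlinear and mixes frequencies.

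To transfer this $\ell^2$ information to the target weighted $\ell^p$ sum of the $a_n$'s, I would decompose $f$ itself into dyadic frequency blocks $f_k:=\sum_{2^k\leq n<2^{k+1}} a_n z^n$. A Bernstein/Nikolski-type inequality for polynomials with spectrum in a dyadic band furnishes $||f_k||_{H^2}\leq C\, 2^{k(1/p-1/2)}||f_k||_{H^p}$; applying Parseval on each block followed by H\"older's inequality inside the block (with exponents $2/p$ and $2/(2-p)$) produces a bound on $\sum_{2^k\leq n<2^{k+1}}(n+1)^{p-2}|a_n|^p$ in terms of $||f_k||_{H^p}^p$. Summing over $k$ and invoking the Littlewood-Paley square-function characterization $||f||_{H^p}^p\asymp ||(\sum_k |f_k|^2)^{1/2}||_{H^p}^p$ then closes the estimate. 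The main obstacle is this last square-function characterization when $0<p<1$, which is the deep Burkholder-Gundy-Silverstein result; in a more self-contained approach one could replace it by the Hardy-Stein-Spencer area identity applied to the zero-free $g$, thereby avoiding the singular factor $|f|^{p-2}$ near zeros of $f$ and converting the entire estimate into a single weighted area integral that can be discretized dyadically in the radial direction.
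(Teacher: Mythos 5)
Your two endpoint observations are correct: Parseval settles $p=2$ and Hardy's inequality settles $p=1$. The factorization paragraph you correctly abandon yourself, since the coefficients of $g^{p/2}$ are not the $a_n$. The genuine gap is in the closing step of your dyadic argument. The block estimate itself is fine: Nikolski plus H\"older on $I_k=[2^k,2^{k+1})$ does give $\sum_{n\in I_k}(n+1)^{p-2}|a_n|^p\le C\|f_k\|_{H^p}^p$. But to finish you must sum this in $k$, which requires $\sum_k\|f_k\|_{H^p}^p\le C\|f\|_{H^p}^p$, and for $0<p<2$ this is false. Take the lacunary polynomial $f(z)=\sum_{k=1}^{K}z^{2^k}$: each block is a single monomial, so $\sum_k\|f_k\|_{H^p}^p=K$, while for lacunary series all $H^p$ norms are comparable to the $\ell^2$ norm of the coefficients, so $\|f\|_{H^p}^p\asymp K^{p/2}$; the ratio is unbounded as $K\to\infty$. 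The Littlewood--Paley square-function characterization cannot repair this, because for $p<2$ the pointwise inequality $\bigl(\sum_k|f_k|^2\bigr)^{p/2}\le\sum_k|f_k|^p$ goes in the wrong direction: $\int\sum_k|f_k|^p\,d\theta$ is not dominated by $\bigl\|\bigl(\sum_k|f_k|^2\bigr)^{1/2}\bigr\|_{L^p}^p$. A secondary problem is that for $p\le 1$ the sharp dyadic projections $f\mapsto f_k$ are not uniformly bounded on $H^p$, so even the individual norms $\|f_k\|_{H^p}$ are not controlled by $\|f\|_{H^p}$; and the Hardy--Stein--Spencer ``fallback'' is only a sketch and does not address the same summation obstruction.

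For comparison: the paper does not prove this statement at all; it is quoted as the classical Hardy--Littlewood coefficient inequality from Duren's book \cite{6}, whose standard proofs proceed through growth/integral-mean estimates for $H^p$ functions (after removing zeros by canonical factorization) rather than through a dyadic block decomposition of the coefficients. If you want a self-contained argument along your lines, you need an inequality that sums correctly in $k$, e.g.\ an embedding of $H^p$ into a Besov-type space with outer exponent $p$, which is essentially equivalent to the lemma itself and so cannot be obtained from the square function by H\"older alone.
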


\begin{theorem}\label{Th3.1}
Suppose $0 < p < \infty$ and let $\mu$ be a positive Borel measure on $[0,1)$. Then the power series in (\ref{1.1}) defines a well defined analytic function in $\mathbb{D}$ for every $f \in H^p$ in any of the two following cases.
~\\
\item (\romannumeral1) The measure $\mu$ is a 1/p-Carleson measure, if $0<p\leq 1.$
\item (\romannumeral2) The measure $\mu$ is a 1-Carleson measure, if $1<p< \infty.$
\end{theorem}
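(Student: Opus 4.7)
My strategy is to route the formal power series in \eqref{1.1} through the integral operator $\mathcal{I}_{\mu_2}(f)(z)=\int_0^1 f(t)/(1-tz)^2\,d\mu(t)$; everything will rest on the single estimate
\[
\int_0^1 |f(t)|\,d\mu(t)\le C\|f\|_{H^p},\qquad f\in H^p,
\]
i.e.\ the Carleson embedding $H^p\hookrightarrow L^1([0,1),d\mu)$. In case (i), with $0<p\le 1$ and $\mu$ a $1/p$-Carleson measure, this is the classical Carleson embedding theorem applied with $q=1$. In case (ii) one has $p>1$, hence $H^p\subset H^1$ continuously with $\|f\|_{H^1}\le\|f\|_{H^p}$, and the $1$-Carleson hypothesis yields $H^1\hookrightarrow L^1(d\mu)$; composition produces the desired estimate.

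\textbf{Integral representation and Fubini.} Granting this bound, for $|z|<1$ the integrand $f(t)/(1-tz)^2$ is dominated pointwise by $|f(t)|/(1-|z|)^2\in L^1(d\mu)$, so $F(z):=\mathcal{I}_{\mu_2}(f)(z)$ is well defined and, by differentiating under the integral sign, analytic on $\mathbb{D}$. Expanding the kernel $1/(1-tz)^2=\sum_{n\ge 0}(n+1)(tz)^n$ and invoking Fubini (justified by $\int|f(t)|/(1-t|z|)^2\,d\mu\le \|f\|_{L^1(d\mu)}/(1-|z|)^2$) gives
\[
F(z)=\sum_{n=0}^\infty (n+1)\,z^n\int_0^1 f(t)t^n\,d\mu(t),
\]
so the $n$-th Taylor coefficient of $F$ is bounded in modulus by $C(n+1)\|f\|_{H^p}$ and the series has radius of convergence at least one.

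\textbf{Matching the Hankel coefficients.} It remains to show that, for each $n\ge 0$,
\[
\int_0^1 f(t)t^n\,d\mu(t)=\sum_{k=0}^\infty a_k\mu_{n+k}.
\]
For each $0<r<1$ the series $f(rt)=\sum_k a_kr^kt^k$ converges uniformly on $[0,1]$, so term-by-term integration is legitimate and gives $\int_0^1 f(rt)t^n\,d\mu(t)=\sum_{k=0}^\infty a_kr^k\mu_{n+k}$. Letting $r\to 1^-$, the left-hand side tends to $\int_0^1 f(t)t^n\,d\mu(t)$, and Abel's theorem identifies the limit of the right-hand side as $\sum_k a_k\mu_{n+k}$. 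Substituting back into the Fubini expansion of $F$ identifies it with the formal series \eqref{1.1}, showing that the latter defines an analytic function on $\mathbb{D}$.

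\textbf{Main obstacle.} The technically delicate point is the passage $r\to 1^-$ inside the integral. The crude pointwise bound $|f(rt)|\le C\|f\|_{H^p}(1-t)^{-1/p}$ is $\mu$-integrable in case (ii), where $s=1>1/p$, which makes dominated convergence immediate; but it is only borderline in case (i), where $s=1/p$. In that situation I would instead combine the uniform estimate $\|f_r\|_{H^p}\le\|f\|_{H^p}$ with the Carleson embedding to obtain $\int|f(rt)|\,d\mu\le C\|f\|_{H^p}$ uniformly in $r\in(0,1)$, and then apply Vitali's convergence theorem to upgrade the pointwise convergence $f(rt)\to f(t)$ on $[0,1)$ to convergence of the integrals.
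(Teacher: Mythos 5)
There is a genuine gap at the step you call ``matching the Hankel coefficients.'' The content of the theorem is precisely that the power series in (\ref{1.1}) is well defined, i.e.\ that each inner series $\sum_{k}\mu_{n,k}a_k=\sum_k a_k\mu_{n+k}$ converges; your argument never establishes this. You show that the Abel means $\sum_k a_k r^k\mu_{n+k}$ converge, as $r\to1^-$, to $\int_{[0,1)}t^nf(t)\,d\mu(t)$, and then invoke ``Abel's theorem'' to conclude that this limit equals $\sum_k a_k\mu_{n+k}$. But Abel's theorem runs in the opposite direction: it assumes the series converges and identifies its sum with the Abel limit. Abel summability alone does not give convergence (compare $\sum(-1)^k$), so as written the argument is circular at exactly the point the theorem is about. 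The step can be repaired, but only by adding the missing convergence argument: in case (i) one has $|a_k|\leq C(k+1)^{1/p-1}\|f\|_{H^p}$ and $\mu_{n+k}\leq C(k+1)^{-1/p}$, which gives either a Tauberian condition $a_k\mu_{n+k}=O(1/k)$ (Littlewood) or, better, the paper's direct route: writing $|a_k|=|a_k|^p|a_k|^{1-p}$ and using the Hardy--Littlewood inequality $\sum(k+1)^{p-2}|a_k|^p\leq C\|f\|_{H^p}^p$ (Lemma \ref{Le3.2}) to get \emph{absolute} convergence of $\sum_k\mu_{n,k}a_k$, after which the interchange of sum and integral is immediate. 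In case (ii) one similarly needs an argument (e.g.\ $\{a_k\}\in\ell^2$ and $\mu_{n+k}\lesssim(k+1)^{-1}\in\ell^2$, or the paper's citation of Theorem 3 of \cite{3}); your proposal supplies none.

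A secondary, more easily fixed point: your passage $r\to1^-$ inside the integral in case (i) appeals to Vitali's theorem, but the uniform bound $\int|f(rt)|\,d\mu\leq C\|f\|_{H^p}$ is only uniform boundedness in $L^1(\mu)$, not uniform integrability, so Vitali does not apply as stated. A clean substitute is a density argument: for a polynomial $g$, $g_r\to g$ uniformly, and $\|f_r-g_r\|_{L^1(\mu)}\leq C\|f-g\|_{H^p}$ by the Carleson embedding applied to $f_r-g_r$, which yields $f_r\to f$ in $L^1(\mu)$. Note also that the paper avoids this limit process altogether: it first proves absolute convergence of the inner sums from the coefficient estimates, then exchanges sum and integral directly, and obtains the identity $\mathcal{DH}_\mu(f)=\mathcal{I}_{\mu_2}(f)$ as a byproduct; your outline of the integral side ($\mathcal{I}_{\mu_2}(f)$ analytic with coefficients $(n+1)\int t^nf\,d\mu$) agrees with the paper, and your treatment of case (ii) via $H^p\subset H^1$ is a fine simplification of the embedding step, but the well-definedness of (\ref{1.1}) itself remains unproved in your version.
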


Furthermore, in such cases we have that
\begin{align}\label{3.1}
  \mathcal{DH}_\mu(f)(z)=\int_{[0,1)} \frac{f(t)}{{(1-tz)^2}} d\mu(t)=\mathcal{I}\mu_2(f)(z).
\end{align}

\begin{proof}
First recall a well known result of Hastings \cite{12}: For $0<p\leq q< \infty$, $\mu$ is a $q/p$-Carleson measure if and only if there exists a positive constant $C$ such that

\begin{align}\label{3.2}
  \left\{\int_{[0,1)} |f(t)|^q d\mu(t) \right\}^\frac{1}{q} \leq C||f||_{H^p}, \quad  for \; all \; f \in H^p.
\end{align}

(\romannumeral1) Suppose that $0<p \leq 1$ and $\mu$ is a $1/p$-Carleson measure. Then (\ref{3.2}) gives
$$\int_{[0,1)} |f(t)| d\mu(t)  \leq C||f||_{H^p}, \quad  for \; all \; f \in H^p.$$
~\\
Fix $f(z) = \sum _{k=0}^ \infty a_k z^k \in H^p$ and $z$ with $|z|<r$, $0 <r< 1$. It follows that

\begin{align}
\int_{[0,1)} \frac{|f(t)|}{{|1-tz|^2}} d\mu(t) & \leq \frac{1}{(1-r)^2} \int_{[0,1)}|f(t)|d\mu (t)    \notag \\
& \leq C \frac{1}{(1-r)^2}||f||_{H^p} .  \notag
\end{align}
This implies that the integral $\displaystyle\int_{[0,1)} \frac{f(t)}{{(1-tz)^2}} d\mu(t)$ uniformly converges and that
\begin{align}\label{3.3}
 \mathcal{I}\mu_2(f)(z)= &\int_{[0,1)} \frac{f(t)}{{(1-tz)^2}} d\mu(t) \notag \\
  =& \sum_{n=0}^\infty(n+1)\left(\int_{[0,1)}t^n f(t)d\mu (t)\right)z^n.
\end{align}

Take $f(z) = \sum _{k=0}^ \infty a_k z^k \in H^p$. Since $\mu$ is $1/p$-Carleson measure, by \cite[Proposition 1]{3} and Lemma \ref{Le3.1},  we have that there exists $C>0$ such that
$$|\mu_{n,k}|=|\mu_{n+k}|\leq \frac{C}{(k+1)^\frac{1}{p}},$$
$$|a_k| \leq C(k+1)^{(1-p)/p} \quad for \; all \; n,k.$$
Then it follows that, for every $n$,
\begin{align}
   (n+1)\sum_{k=0}^\infty |\mu_{n,k}||a_k| & \leq C(n+1) \sum_{k=0}^\infty \frac{|a_k|}{(k+1)^{1/p}} =C(n+1) \sum_{k=0}^\infty \frac{|a_k|^p |a_k|^{1-p}}{(k+1)^{1/p}}\notag \\
   & \leq C(n+1)\sum_{k=0}^\infty \frac{|a_k|^p (k+1)^{(1-p)^2/p}}{(k+1)^{1/p}} \notag\\
   &=C(n+1)\sum_{k=0}^\infty(k+1)^{p-2}|a_k|^p\notag,
\end{align}
and then by Lemma \ref{Le3.2}, we deduce that
$$(n+1)\sum_{k=0}^\infty |\mu_{n,k}||a_k| \leq C(n+1)||f||_{H^p}^p.$$
This implies that $\mathcal{DH}_\mu$ is a well defined for all $z\in \mathbb{D}$ and that
\begin{align}
 \mathcal{DH}_\mu(f)(z)=& \sum_{n=0}^\infty (n+1) (\sum_{k=0}^\infty \mu_{n,k}a_k)z^n \notag\\
  =&  \sum_{n=0}^\infty (n+1) \int_{[0,1)} t^n f(t)d\mu(t) z^n \notag \\
  = & \int_{[0,1)} \frac{f(t)}{{(1-tz)^2}} d\mu(t).
\end{align}
This give that $\mathcal{DH}_\mu(f)=\mathcal{I}_{\mu_2} (f).$
~\\

(\romannumeral2) When $1<p<\infty$, since  $\mu$ is a 1-Carleson measure, (\ref{3.2}) holds, then the argument used in the proof of (\romannumeral1) give that, for every $f \in H^p$, $I_{\mu_2}$ is well defined analytic function in  $\mathbb{D}$ and we have (\ref{3.3}).

And since $\mu$ is $1$-Carleson measure by \cite[Theorm 3]{3}, we know
$$(n+1)\sum_{k=0}^\infty \mu_{n,k}a_k =(n+1)\int_{[0,1)}t^n f(t)d\mu (t), $$
which  implies that $\mathcal{DH}_\mu$ is a well defined for all $z\in \mathbb{D}$, and $\mathcal{DH}_\mu(f)=\mathcal{I}_{\mu_2} (f).$
\end{proof}
\section{ Bounededness of  $\mathcal{DH}_\mu$ acting on Hardy spaces}\label{s4}

In this section, we mainly characterize those measures $\mu$ for which $\mathcal{DH}_\mu$ is a bounded (resp., compact) operator from $H^p$ into $H^q$ for some $p$ and $q$.
\begin{theorem}\label{Th4.1}
Suppose $0<p\leq 1$ and let $\mu$ be a positive Borel measure on $[0,1)$ which satisfies the condition in Theorem \ref{Th3.1}.
~\\
\item (\romannumeral1) If  $q>1$, then $\mathcal{DH}_\mu$ is a bounded operator from $H^p$ into $H^q$ if and only if $\mu$ is a $(1/p+1/q'+1)$-Carleson measure.
\item (\romannumeral2) If  $q = 1$, then $\mathcal{DH}_\mu$ is a bounded operator from $H^p$ into $H^1$ if and only if $\mu$ is a $(1/p+1)$-Carleson measure.
\item (\romannumeral3) If  $0<q<1$,  then $\mathcal{DH}_\mu$ is a bounded operator from $H^p$ into $B_q$ if and only if $\mu$ is a $(1/p+1)$-Carleson measure.
\end{theorem}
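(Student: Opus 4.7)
The plan is to exploit the integral representation
\[
\mathcal{DH}_\mu(f)(z) = \int_{[0,1)}\frac{f(t)}{(1-tz)^2}\,d\mu(t)
\]
from Theorem~\ref{Th3.1}, reducing the problem to studying the kernel $K_t(z):=1/(1-tz)^2$. The three parts have parallel structure; they differ only through the target space $X$ (either $H^q$ or $B_q$) and the corresponding mean-value asymptotics for $K_t$.

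\emph{Sufficiency.} I apply Minkowski's integral inequality, first on the boundary average $d\theta$ and, in case (iii), also on the weight $(1-r)^{1/q-2}dr$, to obtain
\[
\|\mathcal{DH}_\mu(f)\|_X \le \int_{[0,1)} |f(t)|\,\|K_t\|_X\,d\mu(t).
\]
The classical asymptotics $\|K_t\|_{H^q}\asymp(1-t)^{1/q-2}$ for $q\ge 1$, and an analogous polynomial estimate for $\|K_t\|_{B_q}$ when $q<1$, reduce the right-hand side to $C\int|f(t)|\,d\nu(t)$ with $d\nu(t)=(1-t)^{-\beta}d\mu(t)$ for an explicit $\beta$. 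Hastings' embedding (\ref{3.2}) majorises $\int|f|\,d\nu$ by $\|f\|_{H^p}$ precisely when $\nu$ is a $1/p$-Carleson measure, and a dyadic estimate $\int_s^1(1-t)^{-\beta}d\mu(t)\le C(1-s)^{\sigma-\beta}$ (valid when $\mu$ is $\sigma$-Carleson with $\sigma>\beta$) converts this back into the Carleson order on $\mu$ claimed in each part. Sufficiency in (iii) can also be read off at once from (ii) together with the continuous embedding $H^1\hookrightarrow B_q$ for $q<1$.

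\emph{Necessity.} I test with the normalized family
\[
f_s(z) = \frac{(1-s^2)^{\gamma-1/p}}{(1-sz)^\gamma}, \qquad 0<s<1,\ \gamma>1/p,
\]
each of which has $\|f_s\|_{H^p}\asymp 1$. Since $f_s(t)(1-ts)^{-2}\asymp(1-s)^{-1/p-2}$ on $t\in[s,1)$, the integral representation gives
\[
\mathcal{DH}_\mu(f_s)(s)\ge C(1-s)^{-1/p-2}\mu([s,1)).
\]
Coupling this with the pointwise evaluation estimate $|g(s)|\le C\|g\|_X(1-s)^{-1/q}$, valid for $X=H^q$ and, via the containing-Banach-space identification with $H^q$, also for $X=B_q$, produces an upper bound on $\mu([s,1))$ that matches the Carleson exponent stated in (i) and (ii).

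\emph{Main obstacle.} The delicate case is the necessity half of (iii): the $B_q$ norm is a weighted integral of $M_1$ rather than a supremum, so the plain pointwise test above yields only a $(1/p+2-1/q)$-Carleson conclusion, weaker than the claimed $(1/p+1)$ exponent. To pin down the sharp condition I expect to need to pass through the duality $(B_q)^\ast\cong\Lambda_{1/q-1}$, compute the adjoint explicitly via the pairing $\langle\mathcal{DH}_\mu(f),g\rangle = \int_{[0,1)} f(t)(tg(t))'\,d\mu(t)$, and then test the resulting Lipschitz-to-Lipschitz operator against standard mollifiers or shifted Cauchy kernels in $\Lambda_{1/q-1}$ to recover the Carleson condition on $\mu$.
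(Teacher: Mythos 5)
For parts (i) and (ii) your argument is correct and takes a genuinely different route from the paper. The paper turns boundedness into the bilinear estimate $\bigl|\int_{[0,1)}\overline{f(t)}(g(t)+tg'(t))\,d\mu(t)\bigr|\le C\|f\|_{H^p}\|g\|_{X}$ through the identity (\ref{4.2}) and the dualities $(H^q)^*\cong H^{q'}$ and $(H^1)^*\cong BMOA$, then tests with the pair $(f_a,g_a)$ for necessity and uses the growth of $g,g'$ together with $d\nu=(1-t)^{-(1/q'+1)}d\mu$ for sufficiency. You avoid duality altogether: Minkowski's integral inequality (legitimate since $q\ge1$, and the interchange is justified by the standing bound $\int|f|\,d\mu\lesssim\|f\|_{H^p}$ from Theorem \ref{Th3.1}) plus $\|(1-tz)^{-2}\|_{H^q}\asymp(1-t)^{-(1+1/q')}$ and Hastings' embedding gives sufficiency, while necessity follows from the single family $f_s$ and the point evaluation $|F(s)|\le C\|F\|_{H^q}(1-s)^{-1/q}$, the exponents matching because $1/p+2-1/q=1/p+1/q'+1$. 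This is shorter and dispenses with the $r\to1^-$ pairing; the paper's duality scheme, on the other hand, is what its compactness section reuses.

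The genuine gap is the necessity half of (iii), which you flag but do not close, and the repair you propose cannot produce the stated exponent. Your own sufficiency mechanism shows why: $\|(1-tz)^{-2}\|_{B_q}\asymp(1-t)^{1/q-2}$ for $1/2<q<1$, is $\asymp\log\frac{e}{1-t}$ at $q=1/2$, and is bounded uniformly in $t$ for $q<1/2$; hence Minkowski plus Hastings gives boundedness of $\mathcal{DH}_\mu:H^p\to B_q$ as soon as $\mu$ is $(1/p+2-1/q)$-Carleson, and for $q<1/2$ already under the standing $1/p$-Carleson hypothesis (e.g.\ $p=1$, $q=3/4$, $d\mu=(1-t)^{2/3}dt$ gives a bounded operator although $\mu$ is not $2$-Carleson). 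Since $2-1/q<1$ for $q<1$, the exponent $1/p+1$ cannot be necessary, so no amount of testing in the $\Lambda_{1/q-1}$-duality will recover it: the shifted Cauchy kernels (or the paper's $g_a(z)=(1-a^2)/(1-az)$) have $\Lambda_{1/q-1}$-norm of order $(1-a)^{-(1/q-1)}$, not $O(1)$, and the duality computation returns exactly $1/p+2-1/q$, the same exponent your point-evaluation necessity already yields. This is also precisely where the paper's own proof of (iii) is unreliable: its inequality (\ref{4.181}) measures the predual test functions in the $H^\infty$ norm, whereas the identification of $B_q$ as a dual space requires the stronger Lipschitz-type norm. So your two halves in fact meet at $1/p+2-1/q$ (for $1/2<q<1$); the obstruction you describe is a defect of the stated exponent in (iii), not a missing trick in your argument, and as written your proposal does not prove (iii) as stated.
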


\begin{proof}
Suppose $0<p\leq 1$. Since $\mu$ satisfies the condition in Theorem \ref{Th3.1}, as in the proof of Theorem \ref{Th3.1}, we obtain that
$$\int_{[0,1)} |f(z)| d\mu(t)< \infty, \quad for \; all \; f \in H^p.$$
Hence, it follows that
\begin{align}\label{4.1}
  &\int_0^{2\pi} \int_{[0,1)}\left|\frac{f(t)g(e^{i\theta})}{(1-re^{i\theta}t)^2}\right|d\mu(t)d\theta   \notag\\
  & \leq \frac{1}{(1-r)^2}  \int_{[0,1)}|f(t)|d\mu(t) \int_0^{2\pi}|g(e^{i\theta})|d\theta  \notag\\
  & \leq \frac{C ||g||_{H^1}}{(1-r)^2} < \infty \quad  0\leq r<1,\;f\in H^p,g\in H^1.
\end{align}
Using Theorem \ref{Th3.1}, (\ref{4.1}) and Fubini's theorem, and Cauchy's integral representation of of $H^1$ \cite{6}, we obtain
\begin{align}\label{4.2}
    &\frac{1}{2\pi}\int_0^{2\pi}\overline{\mathcal{DH}_\mu(f)(re^{i\theta})}g(e^{i\theta})d\theta \notag \\
    =& \frac{1}{2\pi}\int_0^{2 \pi} \left( \int_{[0,1)} \frac{\overline{f(t)}d\mu(t)}{(1-re^{-i\theta}t)^2}\right)g(e^{i\theta})d\theta \notag  \notag \\
    =&\frac{1}{2\pi}\int_{[0,1)}\overline{f(t)}\int_{|e^{i \theta}|=1}
  \frac{g(e^{i\theta})e^{i\theta}}{(e^{i\theta}-rt)^2}de^{i\theta} d\mu(t) \notag\\
    =&\frac{1}{2\pi}\int_{[0,1)}\overline{f(t)}(g(rt)rt)'d\mu(t)  \notag \\
    =&\frac{1}{2\pi}\int_{[0,1)}\overline{f(t)}(g(rt)+rtg'(rt))d\mu(t),\quad 0\leq r<1,\;f\in H^p,g\in H^1.
\end{align}

(\romannumeral1)First consider  $q>1$. Using (\ref{4.2}) and the  duality theorem \cite{6} for $H^q$ which says that $(H^p)^*\cong H^{p'}$ and $(H^{p'})^*\cong H^p(p>1)$, under the Cauchy pairing
\begin{align}\label{4.9}
  <f,g> = \frac{1}{2\pi} \int_0^{2\pi}\overline{f(e^{i\theta})}g(e^{i\theta})d\theta, \quad f \in H^p, g\in H^{p'}.
\end{align}
We obtain that $\mathcal{DH}_\mu$ is a bounded operator from $H^p$ into $H^q$ if and only if there exists a positive constant $C$ such that
\begin{align}\label{4.3}
  \left|\int_{[0,1)}\overline{f(t)}(g(t)+tg'(t))d\mu(t) \right| \leq C||f||_{H^p}||g||_{H^{q'}},\;f\in H^p,g\in H^{q'}.
\end{align}

Assume that $\mathcal{DH}_\mu$ is a bounded operator from $H^p$ into $H^q$. Take the families of text functions
$$f_a(z)=\left(\frac{1-a^2}{(1-az)^2} \right)^{1/p}, \quad
g_a(z)=\left(\frac{1-a^2}{(1-az)^2} \right)^{1/{q'}}, \quad 0 <a< 1.$$

A calculation shows that $\{f_a\} \subset H^p$, $\{g_a\} \subset H^{q'}$ and
\begin{align}
\sup_{a \in [0,1)}||f||_{H^p} < \infty \quad and \quad \sup_{a \in [0,1)}||g||_{H^{q'}} < \infty , \notag
\end{align}
It follows that
\begin{align}
  \infty & > C \;\sup_{a \in [0,1)}||f||_{H^p}\sup_{a \in [0,1)}||g||_{H^{q'}}  \notag \\
   & \geq \left|\int_{[0,1)}\overline{f_a(t)}(g_a(t)+tg'_a(t))d\mu(t) \right| \notag \\
   & \geq  \int_{[a,1)} \left(\frac{1-a^2}{(1-at)^2} \right)^{1/p} \left(\left(\frac{1-a^2}{(1-at)^2}\right)^{1/{q'}}+\frac{2t^2}{q'}\left(\frac{1-a^2}{(1-at)^{2+{q'}}} \right)^{1/{q'}}  \right) d\mu(t)\notag \\
    & \geq \frac{1}{(1-a^2)^{1/p+1/{q'}+1}} \mu([a,1)).
\end{align}
This is equivalent to saying that $\mu$ is a $(1/p+1/q'+1)$-Carleson measure.

On the other hand, suppose $\mu$ is a $(1/p+1/q'+1)$-Carleson measure,It is well known that any function $g \in H^{q'}$ \cite{6} has the property
\begin{align}\label{4.5}
  |g(z)| \leq C\frac{||g||_{H^{q'}}}{(1-|z|)^{1/q'}}.
\end{align}
By Cauchy formula, we can obtain that
\begin{align}\label{4.7}
  |g'(z)| \leq C \frac{||g||_{H^{q'}}}{(1-|z|)^{1/q'+1}}.
\end{align}
 Let $d\nu(t)=\frac{1}{(1-t)^{1/q'+1}}d\mu(t)$. Using Lemma 3.2 of \cite{20}, we obtain that $\nu$ is a $1/p$-Carleson.
 This together with (\ref{4.5}) and (\ref{4.7}) we obtain that
\begin{align}\label{4.8}
  \left|\int_{[0,1)}\overline{f(t)}(g(t)+tg'(t))d\mu(t) \right| & \leq C ||g||_{H^{q'}}\int_{[0,1)}|f(t)| \left(\frac{1}{(1-t)^{1/q'}}+\frac{t}{(1-t)^{1/q'+1}} \right)d\mu(t) \notag \\
  & \leq C ||g||_{H^{q'}}\int_{[0,1)}|f(t)|d\nu(t),  \notag \\
  & \leq C ||g||_{H^{q'}} ||f||_{H^p}, \quad g\in H^{q'} \;f\in H^p.
\end{align}

Hence (\ref{4.3}) holds and then $\mathcal{DH}_\mu$ is a bounded operator from $H^p$ into $H^q$.

(\romannumeral2)We shall use Fefferman's duality theorem, which says that $(H^1)^*\cong BMOA $ and $(VMOA)^*\cong H^1$, under the Cauchy pairing

\begin{align}\label{4.9}
  <f,g> =\lim_{r\rightarrow 1^-} \frac{1}{2\pi} \int_0^{2\pi}\overline{f(re^{i\theta})}g(e^{i\theta})d\theta, \quad f \in H^1, g\in BMOA(resp.VMOA).
\end{align}
Using the duality theorem and (\ref{4.2}) it follows that $\mathcal{DH}_\mu$ is a bounded operator from $H^p$ into $H^1$ if and only if there exists a positive constant C such that
\begin{align}\label{4.10}
\left|\int_{[0,1)}\overline{f(t)}(g(rt)+rtg'(rt))d\mu(t) \right| \leq C||f||_{H^p}||g||_{BMOA},\notag \\
\quad 0\leq r<1,\;f\in H^p,\;g\in VMOA.
\end{align}
Suppose that $\mathcal{DH}_\mu$ is a bounded operator from $H^p$ into $H^1$. Take the families of text functions
$$f_a(z)=\left(\frac{1-a^2}{(1-az)^2} \right)^{1/p}, \quad
g_a(z)=\log \frac{e}{1-az}, \quad 0 <a< 1.$$
A calculation shows that $\{f_a\} \subset H^p$, $\{g_a\} \subset VMOA$ and
\begin{align}
\sup_{a \in [0,1)}||f||_{H^p} < \infty \quad and \quad \sup_{a \in [0,1)}||g||_{BMOA} < \infty . \notag
\end{align}
We let $r\in [a,1)$, and obtain
\begin{align}
 \infty & > C \;\sup_{a \in [0,1)}||f||_{H^p}\sup_{a \in [0,1)}||g||_{BMOA}  \notag \\
 & \geq \left|\int_{[0,1)}\overline{f_a(t)}(g_a(rt)+rtg'_a(rt))d\mu(t) \right| \notag \\
  & \geq  \int_{[a,1)} \left(\frac{1-a^2}{(1-at)^2} \right)^{1/p} \left(\log \frac{e}{1-art}+\frac{art}{1-art}  \right)d\mu(t) \notag \\
  & \geq  \frac{1}{(1-a^2)^{1/p+1}}\mu([a,1)).
\end{align}
This is equivalent to saying that $\mu$ is a $(1/p+1)$-Carleson measure.

On the other hand, suppose $\mu$ is a $(1/p+1)$-Carleson measure. It is well known that any function $g \in$ $\mathcal{B}$ \cite{14} has the property
\begin{align}\label{4.12}
  |g(z)| \leq C||g||_{\mathcal{B}}\log\frac{e}{1-|z|},\quad and \quad |g'(z)|\leq C\frac{ ||g||_{\mathcal{B}}}{1-|z|} \quad for \;all \; z\in \mathbb{D}.
\end{align}
Let $d\nu(t)=\frac{1}{1-t}d\mu(t)$, then $\nu$ is a $1/p$-Carleson. Using (\ref{4.10}), (\ref{4.12}) and $BMOA \subset \mathcal{B}$ \cite[Theorem 5.1]{19},we obtain that
\begin{align}
  \left|\int_{[0,1)}\overline{f(t)}(g(rt)+rtg'(rt))d\mu(t) \right| & \leq C||g||_{\mathcal{B}} \int_{[0,1)}|f(t)| \left({\log \frac{1}{1-t}}+\frac{t}{1-t} \right)d\mu(t) \notag \\
  & \leq C||g||_{BMOA} \int_{[0,1)}|f(t)|d\nu(t) \notag \\
  & \leq C||g||_{BMOA}||f||_{H^p}, \quad f\in H^p,\;g\in VMOA.
\end{align}
Hence (\ref{4.10}) holds and then $\mathcal{DH}_\mu$ is a bounded operator from $H^p$ into $H^1$.

(\romannumeral3) If  $0<q<1$, and $0<p<1$. Now we recall \cite[Theorem 10]{22} that $B_q$ can be identified with the dual of a certain subspace $X$ of $H^ \infty$ under the pairing
\begin{align}\label{4.171}
  <f,g>=\lim_{r\rightarrow 1^-} \frac{1}{2\pi} \int_0^{2\pi}\overline{f(re^{i\theta})}g(e^{i\theta})d\theta, \quad f \in B_q, \;g\in X.
\end{align}

This together with (\ref{4.2}) and (\ref{4.171}), we obtain that $\mathcal{DH}_\mu$ is a bounded operator from $H^p$ into $B_q$ if and only if there
exists a positive constant C such that
\begin{align}\label{4.181}
  \left|\int_{[0,1)}\overline{f(t)}(g(rt)+rtg'(rt))d\mu(t) \right| \leq C||f||_{H^p}||g||_{H^\infty},\quad 0\leq r<1,\;f\in H^p,g\in H^\infty.
\end{align}

Suppose that $\mathcal{DH}_\mu$ is a bounded operator from $H^p$ into $B_q$, Take the families of text functions
$$f_a(z)=\left(\frac{1-a^2}{(1-az)^2} \right)^{1/p}, \quad
g_a(z)=\frac{1-a^2}{1-az}, \quad 0 <a< 1.$$

A calculation shows that $\{f_a\} \subset H^p$, $\{g_a\} \subset H^\infty$ and
\begin{align}
\sup_{a \in [0,1)}||f||_{H^p} < \infty \quad and \quad \sup_{a \in [0,1)}||g||_{H^\infty} < \infty . \notag
\end{align}
We let $r\in [a,1)$, obtain
\begin{align}
  \infty & > C \;\sup_{a \in [0,1)}||f||_{H^p}\sup_{a \in [0,1)}||g||_{H^\infty}  \notag \\
   & \geq \left|\int_{[0,1)}\overline{f_a(t)}(g_a(rt)+rtg'_a(rt))d\mu(t) \right| \notag \\
   & \geq  \int_{[a,1)} \left(\frac{1-a^2}{(1-at)^2} \right)^{1/p} \left(\left(\frac{1-a^2}{(1-art)}\right)+\left(\frac{art(1-a^2)}{(1-art)^2} \right) \right) d\mu(t)\notag \\
    & \geq \frac{1}{(1-a^2)^{1/p+1}} \mu([a,1)).
\end{align}
This is equivalent to saying that $\mu$ is a $(1/p+1)$-Carleson measure.

On the other hand, suppose $\mu$ is a $(1/p+1)$-Carleson measure, then $d\nu(t)=\frac{1}{1-t}d\mu(t)$ is a $1/p$-Carleson. Then we have that

\begin{align}
  \left| \int_{[0,1)}\overline{f(t)}(g(rt)+rtg'(rt))d\mu(t) \right| & \leq C||g||_{H^\infty} \int_{[0,1)}|f(t)| \left(1+\frac{t}{1-t} \right)d\mu(t)  \notag \\
  & \leq C||g||_{H^\infty} \int_{[0,1)}|f(t)|d\nu(t) \notag \\
  & \leq C||g||_{H^\infty}||f||_{H^p}, \quad f\in H^p,\;g\in H^\infty.
\end{align}
Hence (\ref{4.181}) holds and then $\mathcal{DH}_\mu$ is a bounded operator from $H^p$ into $B_q$.

\end{proof}

Next,we will consider $1<p<\infty,q \geq p$ , and give a sufficient condition, and the other necessary condition for bounded of $\mathcal{DH}_\mu$ from $H^p$ into $H^q$ respectively.

We firstly give Lemma \ref{Le4.1} which is useful when we proof the Theorem \ref{Th4.2}.

\begin{lemma}\label{Le4.1}
Let $\mu$ be a positive measure on $[0,1)$ and $\gamma>0,\alpha>0$. If $\mu$ is a $(\alpha+\gamma)$-Carleson measure, then
$$\int_{[0,1)} \frac{1}{(1-t)^\alpha}d\mu(t)<\infty.$$
\end{lemma}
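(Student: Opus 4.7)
The plan is to prove this by a standard dyadic decomposition of $[0,1)$ at the point $1$, where the integrand blows up. The Carleson condition controls the mass of $\mu$ near $1$, and we just need to verify that the resulting geometric-type series converges, which will happen precisely because the exponent $\gamma > 0$ gives us slack beyond what is needed to cancel the blow-up of $(1-t)^{-\alpha}$.

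Concretely, I would partition $[0,1)$ into the dyadic ``shells''
\[
E_n = \bigl[\,1-2^{-n},\,1-2^{-n-1}\bigr),\qquad n=0,1,2,\dots,
\]
so that $[0,1)=\bigcup_{n\geq 0} E_n$ (disjointly). On each $E_n$ one has the elementary bound $(1-t)^{-\alpha}\leq 2^{(n+1)\alpha}$. Since $\mu$ is identified with a Borel measure on $\mathbb{D}$ supported on $[0,1)$, the hypothesis that $\mu$ is an $(\alpha+\gamma)$-Carleson measure on $\mathbb{D}$ translates (as recalled at the end of Section~\ref{s2}) to
\[
\mu\bigl([1-2^{-n},1)\bigr)\leq C\,2^{-n(\alpha+\gamma)},
\]
and in particular $\mu(E_n)\leq C\,2^{-n(\alpha+\gamma)}$.

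Combining these two estimates and summing the resulting geometric series,
\[
\int_{[0,1)}\frac{d\mu(t)}{(1-t)^\alpha}
=\sum_{n=0}^{\infty}\int_{E_n}\frac{d\mu(t)}{(1-t)^\alpha}
\leq \sum_{n=0}^{\infty} 2^{(n+1)\alpha}\cdot C\,2^{-n(\alpha+\gamma)}
= C\,2^{\alpha}\sum_{n=0}^{\infty} 2^{-n\gamma},
\]
which is finite because $\gamma>0$. This gives the claim.

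There is no real obstacle here; the only thing to watch is the direction of the inequality $(1-t)^{-\alpha}\leq 2^{(n+1)\alpha}$ on $E_n$ (it uses $1-t>2^{-n-1}$) and the fact that the Carleson bound is applied to the tail $[1-2^{-n},1)$ rather than to a boundary interval, which is legitimate because a positive measure on $[0,1)$ was declared to be an $s$-Carleson measure exactly when $\mu([t,1))\leq C(1-t)^s$ for all $t\in[0,1)$. An equivalent (and perhaps more conceptual) argument goes via the layer-cake formula
\[
\int_{[0,1)}\frac{d\mu(t)}{(1-t)^\alpha}
=\int_0^\infty \mu\!\left(\{t\in[0,1):(1-t)^{-\alpha}>s\}\right)\,ds,
\]
splitting at $s=1$, using the trivial bound $\mu([0,1))<\infty$ on $[0,1]$ and the Carleson estimate $\mu([1-s^{-1/\alpha},1))\leq C\,s^{-(\alpha+\gamma)/\alpha}$ on $[1,\infty)$; convergence again reduces to the fact that $-1-\gamma/\alpha<-1$.
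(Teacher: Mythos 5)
Your proof is correct and complete. The paper simply states that the lemma is obvious and omits the details, so there is no written argument to compare against; your dyadic decomposition of $[0,1)$ into the shells $E_n=[1-2^{-n},1-2^{-n-1})$, with the bound $(1-t)^{-\alpha}\leq 2^{(n+1)\alpha}$ on $E_n$ and the Carleson estimate $\mu(E_n)\leq\mu([1-2^{-n},1))\leq C2^{-n(\alpha+\gamma)}$, is exactly the standard computation the authors have in mind, and the convergence of $\sum_n 2^{-n\gamma}$ for $\gamma>0$ closes it. The layer-cake variant is equally valid; the only small point worth making explicit there is that the finiteness of $\mu([0,1))$ used on the range $s\leq 1$ is itself supplied by the Carleson condition applied at $t=0$, so no extra hypothesis is needed.
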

The result is obvious, we omit the details.

\begin{theorem}\label{Th4.2}
 Let $1<p\leq q<\infty$ and  $\mu$ be a positive Borel measure on $[0,1)$ which satisfies the condition in Theorem \ref{Th3.1}.
\item (\romannumeral1)If $\mu$ is a $(1/p+1/q'+1+\gamma)$-Carleson measure for any  $\gamma>0$, then $\mathcal{DH}_\mu$ is a bounded operator from $H^p$ into $H^q$.
\item (\romannumeral2) If  $\mathcal{DH}_\mu$ is a bounded operator from $H^p$ into $H^q$, then $\mu$ is a $(1/p+1/q'+1)$-Carleson measure.
\end{theorem}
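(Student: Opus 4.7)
The plan is to adapt the duality-based strategy used in the proof of Theorem~\ref{Th4.1}(i) to the range $1<p\leq q<\infty$. Since $q\geq p>1$ we have $q'<\infty$, so the duality $(H^q)^*\cong H^{q'}$ under the Cauchy pairing, combined with identity (\ref{4.2}), reduces the boundedness of $\mathcal{DH}_\mu:H^p\to H^q$ to the bilinear estimate
$$\left|\int_{[0,1)}\overline{f(t)}\bigl(g(t)+tg'(t)\bigr)\,d\mu(t)\right|\leq C\|f\|_{H^p}\|g\|_{H^{q'}},\quad f\in H^p,\ g\in H^{q'}.$$
With this reformulation in hand, the two implications are handled by familiar tools already exploited in Theorem~\ref{Th4.1}.

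For the necessity (ii), I would test the above inequality against the same families
$$f_a(z)=\left(\frac{1-a^2}{(1-az)^2}\right)^{1/p},\qquad g_a(z)=\left(\frac{1-a^2}{(1-az)^2}\right)^{1/q'},\qquad a\in[0,1),$$
that appear in Theorem~\ref{Th4.1}(i); a standard computation gives $\sup_a\|f_a\|_{H^p}<\infty$ and $\sup_a\|g_a\|_{H^{q'}}<\infty$. Restricting the integral to $[a,1)$, where every factor of the integrand can be bounded below by a positive multiple of $(1-a^2)^{-1/p-1/q'-1}$, produces the estimate $\mu([a,1))\leq C(1-a^2)^{1/p+1/q'+1}$, which is precisely the desired $(1/p+1/q'+1)$-Carleson condition. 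This calculation is essentially verbatim the one already carried out for Theorem~\ref{Th4.1}(i).

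For the sufficiency (i), I would verify the bilinear estimate directly from the standard pointwise bounds
$$|f(t)|\leq C\|f\|_{H^p}(1-t)^{-1/p},\qquad |g(t)|+t|g'(t)|\leq C\|g\|_{H^{q'}}(1-t)^{-1/q'-1},$$
the second following from (\ref{4.5}) and (\ref{4.7}) via Cauchy's formula applied to $g\in H^{q'}$. Inserting these into the integral gives
$$\left|\int_{[0,1)}\overline{f(t)}\bigl(g(t)+tg'(t)\bigr)\,d\mu(t)\right|\leq C\|f\|_{H^p}\|g\|_{H^{q'}}\int_{[0,1)}\frac{d\mu(t)}{(1-t)^{1/p+1/q'+1}},$$
and under the hypothesis that $\mu$ is $(1/p+1/q'+1+\gamma)$-Carleson, Lemma~\ref{Le4.1} (applied with $\alpha=1/p+1/q'+1$) guarantees that the last integral is finite, closing the argument.

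The main obstacle, and the reason for the mismatch between the conditions in (i) and (ii), is precisely that the crude pointwise estimate for $f\in H^p$ when $p>1$ yields only a finiteness condition on the weighted measure $(1-t)^{-1/q'-1}d\mu(t)$ rather than a Carleson-type condition (as was used for $p\leq 1$ in Theorem~\ref{Th4.1} via Hastings' theorem together with a $1/p$-Carleson auxiliary measure). Lemma~\ref{Le4.1} supplies exactly such a finiteness conclusion, but only provided the Carleson exponent of $\mu$ is strictly larger than $1/p+1/q'+1$; this is the sole role of the parameter $\gamma>0$. Closing the remaining gap---matching the exponents in (i) and (ii)---would require a sharper Carleson-embedding argument exploiting the $H^p$-structure of $f$ rather than pointwise size alone, and lies beyond the scope of this direct approach.
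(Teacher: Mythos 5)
Your proposal is correct, and the necessity part (ii) is exactly what the paper does: the paper simply says the proof is the same as Theorem \ref{Th4.1}(\romannumeral1), i.e.\ test the bilinear estimate (\ref{4.3}) against $f_a$ and $g_a$ and restrict to $[a,1)$, which is your computation verbatim. For the sufficiency (i), however, you take a genuinely different and more elementary route. You bound \emph{both} factors pointwise, using $|f(t)|\leq C\|f\|_{H^p}(1-t)^{-1/p}$ together with (\ref{4.5})--(\ref{4.7}), and then invoke Lemma \ref{Le4.1} with $\alpha=1/p+1/q'+1$; this is valid and closes the estimate with nothing more than the growth estimates. The paper instead keeps the $H^p$-integrability of $f$: it sets $d\nu(t)=(1-t)^{-1}d\mu(t)$, chooses the exponents $s=1+p/q'$, $s'=1+q'/p$, uses the Carleson embedding $H^p\hookrightarrow L^s(d\nu)$ (Duren, Theorem 9.4) for the $f$-factor, H\"older's inequality, and then Lemma \ref{Le4.1} only for the weight factor $\int(1-t)^{-s'/q'}d\nu(t)$. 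Your version is shorter and avoids both the embedding theorem and H\"older; the paper's version exploits more of the $H^p$-structure of $f$, but, as you correctly diagnose, it still spends the strict excess $\gamma>0$ in exactly the same place (the Lemma \ref{Le4.1} finiteness of the weighted integral coming from the pointwise treatment of $g$), so neither argument matches the exponent $1/p+1/q'+1$ of part (ii); the mismatch is inherent to the statement, not a defect of your proof. One small point of care, shared with the paper: the reduction to the bilinear estimate via $(H^q)^*\cong H^{q'}$ and (\ref{4.2}) requires $\mu$ to satisfy the Theorem \ref{Th3.1} hypothesis so that $\int_{[0,1)}|f(t)|\,d\mu(t)<\infty$ for $f\in H^p$ and requires letting $r\to1^-$ in (\ref{4.2}); you state this at the same level of detail as the paper, so it is not a gap relative to it.
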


\begin{proof}
Suppose $\mu$ is a $(1/p+1/q'+1+\gamma)$-Carleson measure. Let $d\nu(t)=\frac{1}{1-t}d\mu(t)$, then $\nu$ is a $(1/p+1/q'+\gamma)$-Carleson. And set $s = 1 + p/q'$, the conjugate exponent of $s$ is $s'=1+q'/p$ and $1/p+1/q'=s/p=s'/q'$. Then by \cite[Theorem 9.4]{6}, $H^p$ is continuously embedded in $L^s(d\nu)$, that is,
\begin{align}\label{4.14}
\left( \int_{[0,1)}|f(t)|^sd\nu(t)\right)^{1/s} \leq C||f||_{H^p}, \quad f\in H^p,
\end{align}
and, by Lemma \ref{Le4.1},

\begin{align}\label{4.15}
  \left( \int_{[0,1)}  \frac{1}{(1-t)^{s'/q'}} d\nu(t)      \right) ^{1/s'}
 < \infty, \qquad g \in H^{q'}.
\end{align}
Using H\"{o}lder's inequality with exponents $s$ and $s'$, (\ref{4.14}) and (\ref{4.15}), we obtain that
\begin{align}
& \left|\int_{[0,1)}\overline{f(t)}(g(t)+tg(t)')d\mu(t) \right| \notag\\
& \leq C ||g||_{H^{q'}}\int_{[0,1)}|f(t)| \left({\frac{1}{(1-t)^{1/q'+1}}} \right)d\mu(t) \notag \\
& = C ||g||_{H^{q'}}\int_{[0,1)}|f(t)| \left({\frac{1}{(1-t)^{1/q'}}} \right)d\nu(t) \notag \\
&  \leq C||g||_{H^{q'}} \left( \int_{[0,1)}|f(t)|^sd\nu(t)\right)^{1/s} \left( \int_{[0,1)}  \frac{1}{(1-t)^{s'/q'}} d\nu(t) \right) ^{1/s'} \notag \\
& \leq C ||g||_{H^{q'}}||f||_{H^p},  \quad f\in H^p,g\in H^{q'}.
\end{align}
Hence, (\ref{4.3}) holds and then it follows that $\mathcal{DH}_\mu$ is a bounded operator from $H^p$ into $H^q$.

Conversely, if  $\mathcal{DH}_\mu$ is a bounded operator from $H^p$ into $H^q$,  then $\mu$ is a $(1/p+1/q'+1)$-Carleson measure. The proof is the same as that of Theorem \ref{4.1}(\romannumeral1). We omit the details here.
\end{proof}

We also find $\mathcal{DH}_\mu$ in $H^p(1\leq p \leq 2)$ have a better conclusion.
\begin{theorem}
Let $1 \leq p \leq 2$ and $\mu$ be a positive Borel measure on $[0,1)$ which satisfies the condition in Theorem \ref{Th3.1}. Then $\mathcal{DH}_\mu$ is a bounded operator in $H^p $ if and only if $\mu$ is a $2$-Carleson measure.
\end{theorem}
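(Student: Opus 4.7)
My plan is to settle the two endpoints $p=1$ and $p=2$ separately and fill in the open range $1<p<2$ by complex interpolation. Necessity of the $2$-Carleson condition is almost free: for $p=1$ it is exactly Theorem~\ref{Th4.1}(ii) with $q=1$, while for $1<p\le 2$ it is Theorem~\ref{Th4.2}(ii) specialized to $q=p$, since the Carleson exponent $1/p+1/p'+1$ collapses to $2$ because $1/p+1/p'=1$. If a self-contained derivation is preferred, I would simply re-run the test families $f_a(z)=\bigl((1-a^2)/(1-az)^2\bigr)^{1/p}$ and $g_a(z)=\bigl((1-a^2)/(1-az)^2\bigr)^{1/p'}$ in the duality identity (\ref{4.2}), which delivers $\mu([a,1))\le C(1-a)^2$ at once.

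Sufficiency at $p=1$ is again the direct content of Theorem~\ref{Th4.1}(ii), so the real work is the case $p=2$. There I would use Parseval to write
\[
\|\mathcal{DH}_\mu(f)\|_{H^2}^2 = \sum_{n=0}^\infty (n+1)^2\left|\int_{[0,1)} t^n f(t)\,d\mu(t)\right|^2,
\]
and apply Cauchy--Schwarz to the positive measure $t^n\,d\mu(t)$ (pairing $f$ against $1$) to get $\bigl|\int t^n f\,d\mu\bigr|^2\le \mu_n\int t^n|f|^2\,d\mu$. Summing in $n$ and swapping by Tonelli leaves
\[
\|\mathcal{DH}_\mu(f)\|_{H^2}^2 \le \int_{[0,1)} |f(t)|^2 \left(\sum_{n=0}^\infty (n+1)^2\mu_n t^n\right) d\mu(t).
\]
The $2$-Carleson hypothesis gives the sharp moment bound $\mu_n\le C(n+1)^{-2}$, so the inner sum is $\le C/(1-t)$. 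Setting $d\nu(t)=(1-t)^{-1}d\mu(t)$, a direct check shows $\nu$ is a $1$-Carleson measure, and the classical Carleson embedding $H^2\hookrightarrow L^2(\nu)$ finishes the estimate. For the remaining range $1<p<2$, the operator $\mathcal{DH}_\mu$ has the same integral representation on every $H^p$ with $p\in[1,2]$ by Theorem~\ref{Th3.1}, is linear, and is now bounded at both endpoints with constants controlled only by the $2$-Carleson norm of $\mu$; the Fefferman--Stein complex interpolation theorem for Hardy spaces therefore transports the bound to every intermediate $H^p$.

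The main obstacle I expect is the $p=2$ step. The reason Theorem~\ref{Th4.2}(i) produced a $\gamma>0$ slack is that its weighted H\"older estimate is not sharp at the critical Carleson exponent; the Cauchy--Schwarz-on-the-measure maneuver above circumvents this because the sharp moment bound $\mu_n\le C(n+1)^{-2}$ exactly absorbs the $(n+1)^2$ weight coming from the extra differentiation factor in $\mathcal{DH}_\mu$, leaving precisely the critical weight $(1-t)^{-1}$ under which $d\mu/(1-t)$ is barely $1$-Carleson. Once this tight endpoint is in hand, interpolation automatically rules out any residual $\gamma$ in the intermediate range.
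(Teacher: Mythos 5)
Your proposal is correct, and its decisive step — sufficiency at $p=2$ — is argued by a genuinely different mechanism than the paper's. The paper works on the coefficient side: from the $2$-Carleson hypothesis it extracts $\mu_{n,k}=\mu_{n+k}\leq C(n+k+1)^{-2}$, absorbs the weight via $(n+1)/(n+k+1)\leq 1$, and then invokes the classical Hilbert inequality $\sum_n\bigl|\sum_k |a_k|/(n+k+1)\bigr|^2\leq \pi^2\sum_k|a_k|^2$ to conclude $\|\mathcal{DH}_\mu f\|_{H^2}\leq C\|f\|_{H^2}$. You instead stay on the measure side: Parseval, Cauchy--Schwarz against $t^n\,d\mu$, the sharp moment bound $\mu_n\leq C(n+1)^{-2}$ (which both arguments ultimately rest on), the summation $\sum_n(n+1)^2\mu_n t^n\leq C/(1-t)$, and then the fact that $d\nu=(1-t)^{-1}d\mu$ is $1$-Carleson (this is exactly Lemma 3.2 of \cite{20}, already used in the paper, so your ``direct check'' is available) together with the Carleson embedding $H^2\hookrightarrow L^2(\nu)$ from (\ref{3.2}). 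Your route trades Hilbert's inequality for the Carleson embedding theorem; it is arguably more transparent about where the critical exponent enters (the weight $(1-t)^{-1}$ is exactly borderline) and it explains cleanly why the $\gamma$-slack of Theorem \ref{Th4.2}(i) disappears at $q=p$, whereas the paper's argument is more elementary in that it needs only $\ell^2$ tools. The remaining pieces coincide with the paper: necessity comes from Theorems \ref{Th4.1}(ii) and \ref{Th4.2}(ii) with $q=p$ (where $1/p+1/p'+1=2$), the case $p=1$ is Theorem \ref{Th4.1}(ii), and the range $1<p<2$ is filled by complex interpolation between the $H^1$ and $H^2$ endpoints, exactly as the paper does via $H^p=(H^2,H^1)_\theta$; your attribution of this to Fefferman--Stein is a naming quibble only, and your remark that the operator is consistently defined on all these spaces by Theorem \ref{Th3.1} is the right hypothesis to make the interpolation legitimate.
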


\begin{proof} Firstly, if $p=1$, by Theorem 4.1 we obtain that $\mathcal{DH}_\mu$ is a bounded operator in $H^1 $ if and only if $\mu$ is a $2$-Carleson measure.

Next, if $p=2$,  by  Theorem 4.2 we only need to prove that if $\mu$ is a $2$-Carleson measure then $\mathcal{DH}_\mu$ is a bounded operator in $H^2$.

Since  $f(z)=\sum_{k=0}^\infty a_k z^k\in H^2$, we have $||f||_{H^2}^2=\sum_{k=0}^\infty|a_k|^2 $, and when $\mu$ is a $2$-Carleson measure, we have
\begin{align}\label{4.191}
|\mu_{n,k}|=|\mu_{n+k}|\leq \frac{C}{(n+k+1)^2}.
\end{align}

By using classical Hilbert inequality, (\ref{1.1}), and (\ref{4.191}), we obtain that
\begin{align}\label{4.201}
||\mathcal{DH}_\mu(f)||_{H^2}^2=\sum_{n=0}^\infty (n+1)^2 \left|\sum_{k=0}^\infty \mu_{n,k}a_k \right|^2 & \leq  \sum_{n=0}^\infty (n+1)^2 \left(\sum_{k=0}^\infty |\mu_{n,k}||a_k| \right)^2 \notag \\
& \leq C \sum_{n=0}^\infty (n+1)^2 \left(\sum_{k=0}^\infty \frac{|a_k|}{(n+k+1)^2} \right)^2 \notag \\
& \leq C \sum_{n=0}^\infty  \left(\sum_{k=0}^\infty \frac{|a_k|}{(n+k+1)} \right)^2 \notag \\
& \leq C \sum_{k=0}^\infty|a_k|^2= C||f||_{H^2}^2.
\end{align}

Thus $\mathcal{DH}_\mu$ is a bounded operator in $H^2$.

Finally, we shall use complex interpolation to prove our results.  We know that
\begin{align}\label{4.21}
 H^p=(H^2,H^1)_\theta, \quad if \; 1<p<2 \; and \; \theta=\frac{2}{p}-1.
\end{align}
 Using (\ref{4.21}) and Theorem 2.4 of \cite{23}, it follows that $\mathcal{DH}_\mu$ is a bounded operator in $H^p(1\leq p\leq 2)$.
\end{proof}
\begin{conjecture}
 We conjecture that if $\mu$ is a $2$-Carleson measure then $\mathcal{DH}_\mu$ is a bounded operator in $H^p$  for all $ 2< p< \infty$.
\end{conjecture}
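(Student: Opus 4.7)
The natural plan is to mirror the argument used for $1\leq p\leq 2$: combine the already-proved $H^2$ boundedness from Theorem 4.3 with boundedness at an ``upper'' endpoint that sits above every $H^p$, and then apply complex interpolation. The canonical choice is $BMOA$, since (via the Fefferman--Stein scale) one has $(H^2,BMOA)_\theta=H^p$ for every $2<p<\infty$ with $\theta=1-2/p$. Thus the conjecture would follow from proving that $\mathcal{DH}_\mu:BMOA\to BMOA$ is bounded whenever $\mu$ is $2$-Carleson, and then invoking interpolation (in the spirit of the proof of Theorem 4.3, which uses $H^p=(H^2,H^1)_\theta$).

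The first, and hardest, step is therefore the endpoint estimate. Following the pairing technique of Theorem 4.1(ii), I would test against $g\in H^1$ via Fefferman duality and aim for
\[
\left|\int_{[0,1)}\overline{f(t)}(g(t)+tg'(t))\,d\mu(t)\right|\leq C||f||_{BMOA}||g||_{H^1}, \quad f\in BMOA,\;g\in H^1.
\]
Using $|f(t)|\leq C||f||_{BMOA}\log\frac{e}{1-t}$ (from $BMOA\subset\mathcal{B}$) and the standard $H^1$ pointwise bounds $|g(t)|+t|g'(t)|\leq C||g||_{H^1}(1-t)^{-2}$, the integrand is majorized by $\log\frac{e}{1-t}(1-t)^{-2}\,d\mu(t)$. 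For a $2$-Carleson $\mu$ the integral $\int_{[0,1)}(1-t)^{-2}\log\frac{e}{1-t}\,d\mu(t)$ need not converge (an integration by parts produces a boundary term of order $\log\frac{1}{1-t}$), so absolute-value estimates are not quite enough: one must exploit genuine cancellation in the pairing. Likely tools are the atomic decomposition of $H^1$ (reducing the estimate to atoms supported on small arcs so that the $\log$ factor is neutralized by the mean-zero property) or integration by parts in $t$ against $d\mu(t)$ so that the decay $F(t)=\mu([t,1))\leq C(1-t)^2$ absorbs the $(1-t)^{-2}$ blow-up.

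A structural remark clarifies where the difficulty lies: writing $\mathcal{DH}_\mu f=(z\mathcal{I}_{\mu_1} f)'$ shows that $\mathcal{DH}_\mu$ is essentially a derivative of the classical generalized Hilbert operator, and the extra differentiation costs a full power of $(1-t)^{-1}$ relative to $\mathcal{H}_\mu$. This gap is exactly the difference between what a $2$-Carleson measure provides and what a naive $BMOA$-pairing estimate requires. As a cross-check away from the endpoint, a direct Minkowski bound yields $||\mathcal{DH}_\mu f||_{H^p}\leq C\int_{[0,1)}|f(t)|(1-t)^{-2+1/p}\,d\mu(t)$, but the test functions $f(t)=(1-t)^{-1/p+\varepsilon}$ with $d\mu(t)=(1-t)\,dt$ show that this inequality is too lossy for $p>2$, confirming that a sharper argument is unavoidable. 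The real work in the conjecture is therefore to recover the missing half-power of $(1-t)$ through cancellation, not through strengthening the hypothesis on $\mu$---and that is where I expect the main obstacle to be.
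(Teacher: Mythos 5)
The statement you are addressing is left as an open conjecture in the paper; the authors give no proof, so there is nothing to compare your argument against except its own internal completeness --- and as written it is a programme, not a proof. Your interpolation scaffold is sound in principle: $[H^2,BMOA]_\theta=H^p$ with $\theta=1-2/p$ is a known Fefferman--Stein/Janson--Jones type identity, and the paper's own proof of Theorem 4.3 shows the authors are happy to interpolate against an endpoint. But the entire content of the conjecture is then shifted into the endpoint claim that $\mathcal{DH}_\mu:BMOA\to BMOA$ is bounded whenever $\mu$ is $2$-Carleson, and you do not establish it. Your own computation shows why: pairing against $g\in H^1$ and using $|f(t)|\leq C\|f\|_{BMOA}\log\frac{e}{1-t}$ together with $|g(t)|+t|g'(t)|\leq C\|g\|_{H^1}(1-t)^{-2}$ leads to $\int_{[0,1)}(1-t)^{-2}\log\frac{e}{1-t}\,d\mu(t)$, which a $2$-Carleson measure does not control (already $\int_{[0,1)}(1-t)^{-2}d\mu(t)$ is borderline divergent; compare Lemma 4.1, which needs the exponent strictly below the Carleson order). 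The appeal to ``cancellation'' via atoms or integration by parts is only a hope, not an argument, and you yourself flag it as the main obstacle.

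Moreover, there is concrete evidence that this particular endpoint cannot be reached under the bare $2$-Carleson hypothesis: for $\mathcal{DH}_\mu$ acting on the Bloch space (reference [18] of the paper), and quite generally for Hilbert-type operators on $\mathcal{B}$ and $BMOA$, boundedness is characterized by \emph{logarithmic} Carleson conditions, precisely because of the $\log\frac{e}{1-t}$ growth you encounter. So proving $BMOA\to BMOA$ boundedness from $2$-Carleson alone is likely false, and even if some weaker endpoint (say $H^s\to H^s$ for large finite $s$, or $BMOA\to\mathcal{B}$) were substituted, you would still need a genuinely new estimate for $p>2$ --- Lemma 3.2, which drives the $H^2$ case, has no analogue for $p>2$, and the duality argument of Theorem 4.2 only yields boundedness under the strictly stronger $(1/p+1/q'+1+\gamma)$-Carleson hypothesis. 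In short: your reduction correctly isolates where the difficulty lies, but it neither closes the gap nor makes it smaller than the conjecture itself, and the specific endpoint you propose is probably unattainable under the stated hypothesis.
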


\section{ Compactness of  $\mathcal{DH}_\mu$ acting on Hardy spaces}\label{s5}
In this section we characterize the compactness of the
Derivative-Hilbert $\mathcal{DH}_\mu$. We begin with the following Lemma \ref{Le4.4} which is useful to deal with the compactness.

\begin{lemma}\label{Le4.4}
For $0 < p < \infty $ and $0 < q < \infty$. Suppose that $\mathcal{DH}_\mu$ is a bounded operator from $H^p$ into $H^q(resp.,B_q)$. Then $\mathcal{DH}_\mu$ is a compact operator if and only if for any bounded sequence $\{f_n\}$ in $H^p$ which converges uniformly to 0 on every compact subset of  $\mathbb{D}$, we have $\mathcal{DH}_\mu(f_n) \rightarrow 0$ in $H^q(resp.,B_q)$.
\end{lemma}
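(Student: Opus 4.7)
The plan is to prove both implications via the standard normal-family and subsequential arguments that are classical for compactness questions on spaces of analytic functions.

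For sufficiency ($\Leftarrow$), I would take a bounded sequence $\{f_n\}\subset H^p$; the $H^p$ growth estimate $|f_n(z)|\leq C\|f_n\|_{H^p}(1-|z|)^{-1/p}$ makes $\{f_n\}$ a normal family, so Montel's theorem produces a subsequence $f_{n_k}\to f$ locally uniformly, with $f\in H^p$ by Fatou applied to $M_p(r,\cdot)$. The hypothesis applied to the bounded, locally uniformly null sequence $\{f_{n_k}-f\}$ gives $\mathcal{DH}_\mu(f_{n_k})\to \mathcal{DH}_\mu(f)$ in the target norm, showing that $\mathcal{DH}_\mu$ sends bounded sets to relatively compact ones.

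For necessity ($\Rightarrow$), I would use the subsequence principle. Given $\{f_n\}$ bounded in $H^p$ with $f_n\to 0$ locally uniformly, compactness of $\mathcal{DH}_\mu$ guarantees that every subsequence of $\{\mathcal{DH}_\mu(f_n)\}$ admits a further subsequence converging in norm to some $g$, so it suffices to identify every such limit $g$ as zero. Since convergence in $H^q$ or $B_q$ implies pointwise convergence on $\mathbb{D}$, I would use the integral representation of Theorem \ref{Th3.1},
$$\mathcal{DH}_\mu(f_{n_k})(z)=\int_{[0,1)}\frac{f_{n_k}(t)}{(1-tz)^2}\,d\mu(t),$$
and pass to the pointwise limit zero by dominated convergence, using that $f_{n_k}(t)\to 0$ for each $t\in[0,1)$ while the $H^p$ growth bound supplies a dominant of order $(1-t)^{-1/p}(1-|z|)^{-2}$.

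The main obstacle I anticipate is verifying that this dominant is integrable with respect to $\mu$, i.e.\ that $\int_{[0,1)}(1-t)^{-1/p}\,d\mu(t)<\infty$. This follows because in every case covered by Theorems \ref{Th4.1} and \ref{Th4.2}, boundedness of $\mathcal{DH}_\mu$ forces $\mu$ to be at least a $(1/p+1)$-Carleson measure, and a routine dyadic decomposition of $[0,1)$ converts this condition into the required integrability. Once the subsequential limit is identified as $g\equiv 0$, the subsequence principle upgrades subsequential convergence to $\mathcal{DH}_\mu(f_n)\to 0$ along the full sequence, completing the argument.
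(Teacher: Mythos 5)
Your proposal follows the same classical route that the paper itself intends: the authors omit the argument and simply cite Cowen--MacCluer, Proposition 3.11, whose proof is exactly your scheme (sufficiency via Montel and linearity applied to $f_{n_k}-f$, necessity via the subsequence principle plus identification of the subsequential limit through point evaluations, which are indeed bounded on $H^q$ and on $B_q$). You also correctly isolate the only operator-specific point, namely that $\mathcal{DH}_\mu(f_{n_k})(z)\to 0$ pointwise is not automatic for this operator: it needs the representation (\ref{3.1}) and a dominated-convergence argument with dominant of order $(1-t)^{-1/p}(1-|z|)^{-2}$, and the hypothesis of Theorem \ref{Th3.1} alone (a $1/p$- or $1$-Carleson measure) does \emph{not} make $(1-t)^{-1/p}$ $\mu$-integrable, so the stronger Carleson exponent forced by boundedness, combined with Lemma \ref{Le4.1}, is genuinely needed. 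Your anticipation and resolution of that obstacle is exactly right.

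The one caveat is scope. Your justification of $\int_{[0,1)}(1-t)^{-1/p}\,d\mu(t)<\infty$ invokes the necessity halves of Theorems \ref{Th4.1} and \ref{Th4.2}, and your pointwise identification uses (\ref{3.1}); both are available only when $\mu$ satisfies the hypothesis of Theorem \ref{Th3.1} (which the lemma does not state) and only for the ranges $0<p\le 1$ with $q\ge 1$ or target $B_q$, and $1<p\le q$. The lemma as stated claims all $0<p,q<\infty$ with no assumption beyond boundedness, so as written your necessity argument does not cover, for instance, targets $H^q$ with $q<1$, or $q<p$. This is as much a defect of the loose statement as of your proof: in every case where the paper actually applies the lemma your ingredients are present, and there is no circularity since Theorems \ref{Th4.1} and \ref{Th4.2} precede it. If you want the necessity direction free of these inputs when $1<p<\infty$, observe instead that a bounded sequence tending to $0$ uniformly on compacta is weakly null in the reflexive space $H^p$ (test against polynomials, dense in $H^{p'}$), and a compact operator sends weakly null sequences to norm-null ones; for $p\le 1$ your dominated-convergence identification remains the natural route.
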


The proof is similar to that of \cite[Proposition 3.11]{4}. We omit the details.

\begin{theorem}\label{Th4.5}
Suppose $0<p \leq 1$ and let $\mu$ be a positive Borel measure on $[0,1)$ which satisfies the condition in Theorem \ref{Th3.1}.
~\\
\item (\romannumeral1) If   $q>1$, then $\mathcal{DH}_\mu$ is a compact operator from $H^p$ into $H^q$ if and only if $\mu$ is a vanishing $(1/p+1/q'+1)$-Carleson measure.
\item (\romannumeral2) If  $q = 1$ , then $\mathcal{DH}_\mu$ is a compact operator from $H^p$ into $H^1$ if and only if $\mu$ is a vanishing $(1/p+1)$-Carleson measure.
\item (\romannumeral3) If  $0<q <1$ , then $\mathcal{DH}_\mu$ is a compact operator from $H^p$ into $B_q$ if and only if $\mu$ is a vanishing $(1/p+1)$-Carleson measure.
\end{theorem}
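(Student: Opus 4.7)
The plan is to handle all three cases simultaneously by combining the duality framework of Theorem \ref{Th4.1} with the sequential compactness criterion of Lemma \ref{Le4.4}. In each case the scheme is the same: necessity is obtained by plugging the same test functions used for boundedness into the compactness criterion, and sufficiency is obtained by splitting the duality integral into a central piece and a tail, then using the vanishing Carleson property to control the tail uniformly.

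For the necessity direction in each case, I would take exactly the family
$$f_a(z)=\left(\frac{1-a^2}{(1-az)^2}\right)^{1/p},$$
paired respectively with $g_a(z)=\bigl((1-a^2)/(1-az)^2\bigr)^{1/q'}$ in case (i), $g_a(z)=\log(e/(1-az))$ in case (ii), and $g_a(z)=(1-a^2)/(1-az)$ in case (iii). The crucial new observation, beyond what was used in Theorem \ref{Th4.1}, is that $\{f_a\}$ is not only bounded in $H^p$ but also converges to $0$ uniformly on each compact subset of $\mathbb{D}$ as $a\to 1^-$. By Lemma \ref{Le4.4}, compactness of $\mathcal{DH}_\mu$ forces $\|\mathcal{DH}_\mu(f_a)\|_{H^q}\to 0$ (resp.\ $B_q$). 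Testing against $g_a$ via the pairings (\ref{4.9}), (\ref{4.10}), (\ref{4.181}) and using the lower estimate already computed in the proof of Theorem \ref{Th4.1} then yields
$$\frac{\mu([a,1))}{(1-a^2)^{1/p+1/q'+1}}\longrightarrow 0\quad(\text{or the analogous expression with }1/q'=0),$$
which is exactly the vanishing Carleson condition.

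For sufficiency, assume the vanishing condition and pick any bounded sequence $\{f_n\}\subset H^p$ with $f_n\to 0$ uniformly on compact subsets of $\mathbb{D}$. In case (i), duality gives
$$\|\mathcal{DH}_\mu(f_n)\|_{H^q}=\sup_{\|g\|_{H^{q'}}\leq 1}\left|\int_{[0,1)}\overline{f_n(t)}\bigl(g(t)+tg'(t)\bigr)\,d\mu(t)\right|.$$
I would fix $r\in(0,1)$ and split the integral at $r$. On $[0,r]$, the integrand is dominated by $C(r)\sup_{t\in[0,r]}|f_n(t)|\,\|g\|_{H^{q'}}$ (using (\ref{4.5})--(\ref{4.7}) and finiteness of $\mu$), which vanishes as $n\to\infty$. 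On $(r,1)$, the estimate (\ref{4.8}) applied to $\mu|_{(r,1)}$ gives a bound $C\,\mathcal{N}(\nu_r)\|f_n\|_{H^p}\|g\|_{H^{q'}}$, where $d\nu_r(t)=\chi_{(r,1)}(t)(1-t)^{-1/q'-1}d\mu(t)$. Combining Lemma 3.2 of \cite{20} with the vanishing Carleson hypothesis on $\mu$ and characterization (\ref{2.1}), one sees that $\nu_r$ is vanishing $1/p$-Carleson and $\mathcal{N}(\nu_r)\to 0$ as $r\to 1^-$. An $\varepsilon/2$ argument (choose $r$ close to $1$, then $n$ large) concludes this case. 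Cases (ii) and (iii) are identical in structure, with $g\in VMOA$ (resp.\ $H^\infty$) in place of $H^{q'}$ and the Bloch estimate (\ref{4.12}) (resp.\ the trivial $|g|\leq\|g\|_{H^\infty}$, $|g'|\leq\|g\|_{H^\infty}/(1-|z|)$) replacing (\ref{4.5})--(\ref{4.7}).

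The main obstacle is the transfer step: verifying that when $\mu$ is vanishing $(1/p+1/q'+1)$-Carleson (resp.\ $(1/p+1)$-Carleson), the weighted tail measure $\nu_r$ has embedding constant $\mathcal{N}(\nu_r)\to 0$ as $r\to 1^-$, so that the tail piece of the duality integral can be made small uniformly in $\|g\|_{H^{q'}}\leq 1$ (resp.\ $\|g\|_{BMOA}\leq 1$, $\|g\|_{H^\infty}\leq 1$). Once this quantitative version of Lemma 3.2 of \cite{20} for vanishing Carleson measures is in hand, the rest is a routine adaptation of the bounded case.
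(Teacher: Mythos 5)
Your proposal is correct and follows essentially the same route as the paper: the same test families $f_a$, $g_a$ combined with Lemma \ref{Le4.4} for necessity, and for sufficiency the same splitting of the duality integral at $r$ with the weighted measure $d\nu(t)=(1-t)^{-1/q'-1}d\mu(t)$ (resp.\ $(1-t)^{-1}d\mu(t)$), whose vanishing $1/p$-Carleson property and the criterion (\ref{2.1}) give $\mathcal{N}(\nu_r)\to 0$. If anything, your explicit supremum over the unit ball of $H^{q'}$ (resp.\ $BMOA$, $H^\infty$) and the $\varepsilon/2$ argument make precise the uniformity in $g$ that the paper uses only implicitly.
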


\begin{proof}
(\romannumeral1) First consider $q>1$. Suppose that $\mathcal{DH}_\mu$ is a compact operator from  $H^p$ into $H^q$. Let $\{a_n\}\subset (0,1)$ be any sequence with $a_n\rightarrow 1$. We set
$$f_{a_n}(z)=\left(\frac{1-a_n^2}{(1-a_n z)^2} \right)^{1/p}, \quad  z\in \mathbb{D}.$$
Then $f_{a_n}(z) \in H^p$, $\sup_{n \geq 1} ||f_{a_n}||_{H^p} < \infty $ and $f_{a_n}\rightarrow0 $, uniformly on any compact subset of $\mathbb{D}$. Using Lemma \ref{Le4.4} and bearing in mind that $\mathcal{DH}_\mu$ is a compact operator from $H^p$ into $H^q$, we obtain that $\{\mathcal{DH}_\mu(f_{a_n} )\}$ converges to 0 in $H^q$. This and (\ref{4.2}) imply that
\begin{align}\label{4.17}
  & \lim_{n\rightarrow \infty }\int_{[0,1)}\overline{f_{a_n}(t)}(g(t)+tg'(t))d\mu(t) \notag \\
  =&\lim_{n\rightarrow \infty}\int_0^{2\pi}\overline{\mathcal{DH}_\mu(f_{a_n})(e^{i\theta})}g(e^{i\theta})d\theta=0, \quad g\in H^{q'}.
\end{align}
Now we wet
$$g_{a_n}(z)=\left(\frac{1-a_n^2}{(1-a_n z)^2} \right)^{1/q'}, \quad  z\in \mathbb{D}.$$
It is obvious find that $g\in H^{q'}$. For every $n$, fix $r\in (a_n,1)$. Thus,
\begin{align}
&\int_{[0,1)}\overline{f_{a_n}(t)}(g_{a_n}(t)+tg'_{a_n}(t))d\mu(t) \notag \\
& \geq  C \int_{[a_n,1)} \left(\frac{1-a_n^2}{(1-a_nt)^2} \right)^{1/p} \left(\left(\frac{1-a_n^2}{(1-a_nt)^2}\right)^{1/{q'}}+\frac{2t^2}{q'}
\left(\frac{1-a_n^2}{(1-a_nt)^{2+{q'}}} \right)^{1/{q'}}  \right)  d\mu(t)\notag \\
    & \geq \frac{C}{(1-a_n^2)^{1/p+1/{q'}+1}} \mu([a_n,1)).  \notag
\end{align}

By (\ref{4.17}) and the fact $\{a_n\}\subset(0,1)$ is a sequence with $a_n\rightarrow 1$, as $ n\rightarrow \infty$, we obtain  that
$$\lim_{a\rightarrow1^-}\frac{1}{(1-a_n^2)^{1/p+1/{q'}+1}} \mu([a_n,1))=0.$$
Thus $\mu$ is a vanishing $(1/p+1/q'+1)$-Carleson measure.

On the other hand, suppose that $\mu$ is a vanishing $(1/p+1/q'+1)$-Carleson measure. Let $\{f_n\}_{n=1}^\infty $ be a sequence of $H^p$ functions with $\sup_{n\geq 1} || f_n||_{ H^p} < \infty$ and such that $\{f_n\} \rightarrow 0$, uniformly on any compact subset of $\mathbb{D}$. Then by Lemma \ref{Le4.4}, it is enough to prove that $\{\mathcal{DH}_\mu(f_n )\}\rightarrow0$ in $H^q$.

Taking $g\in H^{q'}$ and $r\in [0,1)$, we have
\begin{align}
   & \int_{[0,1)}|f_n(t)||(g(t)+tg'(t)|d\mu(t) \notag \\
   & = \int_{[0,r)}|f_n(t)||(g(t)+tg'(t)|d\mu(t)+\int_{[r,1)}|f_n(t)||(g(t)+tg'(t)|d\mu(t). \notag
\end{align}
Then $\displaystyle\int_{[0,r)}|f_n(t)||(g(t)+tg'(t)|d\mu(t)$ tends to $0$ as $\{f_n\}\rightarrow 0$ uniformly on any compact subset of $\mathbb{D}$.

And by conclusion in the proof of the boundedness in Theorem \ref{Th4.1}(\romannumeral1), let $d\nu(t)=\frac{1}{(1-t)^{1/q'+1}}d\mu(t)$. We know that $\nu$ is a vanishing $1/p$-Carleson. Then it implies that
\begin{align}
\int_{[r,1)}|f_n(t)||(g(t)+tg'(t)|d\mu(t)  & \leq C ||g||_{H^{q'}} \int_{[0,1)}|f_n(t)|d\nu_r(t) \notag \\
  & \leq C \mathcal{N}(\nu_r)||g||_{H^{q'}}||f_n||_{H^p} \leq C \mathcal{N}(\nu_r)||g||_{H^{q'}}.
  \end{align}
It also tends to $0$ by (\ref{2.1}). Thus
\begin{align}
   &\lim_{n\rightarrow\infty} \left| \int_0^{2\pi}\overline{\mathcal{DH}_\mu(f_{n})(e^{i\theta})}g(e^{i\theta})d\theta \right|
    \notag \\
  =& \lim_{n\rightarrow \infty }\int_{[0,1)}|f_{n}(t)||(g(t)+tg'(t))|d\mu(t)=0, \quad for\; all\;g\in H^{q'}. \notag
\end{align}
It means $\mathcal{DH}_\mu(f_{n})\rightarrow0\; in \; H^q$, by Lemma \ref{Le4.4} we obtain $\mathcal{DH}_\mu$ is a compact operator from $H^p$ into $H^q$.

(\romannumeral2) Let $q=1$. Suppose that $\mathcal{DH}_\mu$ is a compact operator from $H^p$ into $H^1$. Let $\{a_n\}\subset (0,1)$ be any sequence with $a_n\rightarrow 1$ and $f_{a_n}$ defines like in  (\romannumeral1). Lemma \ref{Le4.4} implies that $\{\mathcal{DH}_\mu(f_{a_n} )\}$ converges to 0 in $H^1$. Then we have
\begin{align}\label{4.19}
 & \lim_{n\rightarrow \infty }\int_{[0,1)}\overline{f_{a_n}(t)}(g(rt)+rtg'(rt))d\mu(t) \notag \\
  =&\lim_{n\rightarrow \infty}\int_0^{2\pi}\overline{\mathcal{DH}_\mu(f_{a_n})(re^{i\theta})}g(e^{i\theta})d\theta=0, \quad g\in VMOA.
\end{align}
Set
$$g_{a_n}(z)=\log \frac{e}{1-a_nz}.$$
It is well know that $g\in VMOA$. For $r\in (a_n,1)$, we deduce that
\begin{align}\label{5344}
  & \int_{[0,1)}\overline{f_{a_n}(t)}(g(rt)+rtg'(rt))d\mu(t) \notag \\
  & \geq C \int_{[a_n,1)} \left(\frac{1-a_n^2}{(1-a_nt)^2} \right)^{1/p} \left(\log \frac{e}{1-a_n rt}+\frac{a_n rt}{1-a_n rt}  \right)d\mu(t) \notag \\
  & \geq  \frac{C}{(1-a_n)^{1/p+1}} \mu([a_n,1)). \notag
\end{align}
Letting $a_n\rightarrow1^-$ as $n\rightarrow \infty$, we have
$$\lim_{a\rightarrow1^-}\frac{1}{(1-a_n^2)^{1/p+1}} \mu([a_n,1))=0.$$

we can obtain that $\mu$ is a vanishing $(1/p+1)$-Carleson measure.

On the other hand, suppose that $\mu$ is a vanishing $(1/p+1)$-Carleson measure. Let $d\nu(t)=(1-t)^{-1}d\mu(t)$, we know that $\nu$ is a vanishing $1/p$-Carleson.
Let $\{f_n\}_{n=1}^\infty $ be a sequence of $H^p$ functions with $\sup_{n\geq 1} || f_n||_{ H^p} < \infty$ and such that $\{f_n\} \rightarrow 0$, uniformly on any compact subset of $\mathbb{D}$. Then by Lemma \ref{Le4.4}, it is enough to prove that $\{\mathcal{DH}_\mu(f_n )\}\rightarrow0$ in $H^1$.
For every $g \in VMOA, 0 <r< 1$ , using (\ref{3.2}) and (\ref{4.12}), we deduce that
\begin{align}
   & \int_{[0,1)}|f_n(t)||(g(t)+tg'(t)|d\mu(t) \notag \\
   & = \int_{[0,r)}|f_n(t)||(g(t)+tg'(t)|d\mu(t)+\int_{[r,1)}|f_n(t)||(g(t)+tg'(t)|d\mu(t). \notag
\end{align}
Then $\displaystyle\int_{[0,r)}|f(t)||(g(t)+tg'(t)|d\mu(t)$ tends to 0 as $\{f_n\}\rightarrow 0$ uniformly on any compact subset of $\mathbb{D}$. For second term, arguing as in the proof of the boundedness in Theorem \ref{Th4.1} (\romannumeral2), we obtain that
\begin{align}
\int_{[r,1)}|f_n(t)||(g(t)+tg'(t)|d\mu(t)  &  \leq C \|g\|_{BMOA}\int_{[0,1)}|f_n(t)|\left({\log \frac{1}{1-t}}+\frac{t}{1-t} \right)d\mu(t) \notag \\
& \leq C\|g\|_{BMOA} \int_{[0,1)}|f_n(t)|d\nu_r(t) \notag \\
  & \leq C \mathcal{N}(\nu_r)\|g\|_{BMOA}\|f_n\|_{H^p}, \notag \\
  & \leq C \mathcal{N}(\nu_r)\|g\|_{BMOA},\, g\in VMOA,
  \end{align}
it also tends to 0 by (\ref{2.1}). Thus
\begin{align}
   &\lim_{n\rightarrow\infty} \left| \int_0^{2\pi}\overline{\mathcal{DH}_\mu(f_{n})(e^{i\theta})}g(e^{i\theta})d\theta \right|
    \notag \\
  =& \lim_{n\rightarrow \infty }\int_{[0,1)}|f_{n}(t)||(g(t)+tg'(t))|d\mu(t)=0, \quad for\; all\;g\in VMOA. \notag
\end{align}
It means $\mathcal{DH}_\mu(f_{n})\rightarrow0 \;in \; H^1$, by Lemma \ref{Le4.4} we obtain $\mathcal{DH}_\mu$ is a compact operator from $H^p$ into $H^1$.

(\romannumeral3) The proof is the same as that of Theorem \ref{Th4.1}(\romannumeral3) and Theorem \ref{Th4.5}(1). We omit the details here.
\end{proof}

Finally, we consider the situation of $ p> 1$, characterize those measures $\mu$ for which $\mathcal{DH}_\mu$ is a compact operator from $H^p$ into $H^q$, and give sufficient and necessary conditions respectively.

\begin{theorem}\label{Th4.6}
 Let $1<p\leq q<\infty$  and $\mu$ be a positive Borel measure on $[0,1)$ which satisfies the condition in Theorem \ref{Th3.1}.
\item (\romannumeral1) If $\mu$ is a vanishing $(1/p+1/q'+1+\gamma)$-Carleson measure for any $\gamma>0$, then $\mathcal{DH}_\mu$ is a compact operator from $H^p$ into $H^q$.
\item (\romannumeral2) If $\mathcal{DH}_\mu$ is a compact operator from $H^p$ into $H^q$, then $\mu$ is a vanishing $(1/p+1/q'+1)$-Carleson measure.
\end{theorem}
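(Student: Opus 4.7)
The strategy is parallel to that of Theorem~\ref{Th4.5} but with the H\"{o}lder step from Theorem~\ref{Th4.2} replacing the direct Carleson embedding. Throughout, I would work through the duality reformulation
$$\|\mathcal{DH}_\mu f\|_{H^q} \asymp \sup_{\|g\|_{H^{q'}}\leq 1}\left|\int_{[0,1)}\overline{f(t)}\bigl(g(t)+tg'(t)\bigr)d\mu(t)\right|,$$
which is valid for $1<q<\infty$ by (\ref{4.2}) together with $(H^q)^{*}\cong H^{q'}$, and reduce compactness to Lemma~\ref{Le4.4}: it suffices to prove $\mathcal{DH}_\mu(f_n)\to 0$ in $H^q$ whenever $\{f_n\}$ is bounded in $H^p$ and tends to $0$ uniformly on compact subsets of $\mathbb{D}$.

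For the necessity (ii), I would mirror the test-function argument of Theorem~\ref{Th4.5}(i). For any $a_n\to 1^-$, the functions $f_{a_n}(z)=\bigl((1-a_n^2)/(1-a_n z)^2\bigr)^{1/p}$ are uniformly bounded in $H^p$ and tend to $0$ uniformly on compacta, so Lemma~\ref{Le4.4} combined with the duality pairing forces $\langle\mathcal{DH}_\mu(f_{a_n}),g\rangle\to 0$ uniformly over the unit ball of $H^{q'}$. Testing against $g_{a_n}(z)=\bigl((1-a_n^2)/(1-a_n z)^2\bigr)^{1/q'}$, whose $H^{q'}$-norm is uniformly bounded, and reusing verbatim the lower bound already computed in the necessity half of Theorem~\ref{Th4.2}(ii) yields $\mu([a_n,1))=o\bigl((1-a_n)^{1/p+1/q'+1}\bigr)$; arbitrariness of $\{a_n\}$ then gives the vanishing Carleson condition.

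For the sufficiency (i), set $d\nu(t)=(1-t)^{-1}\,d\mu(t)$. The hypothesis forces $\nu$ to be vanishing $(1/p+1/q'+\gamma)$-Carleson, and in particular vanishing $(1/p+1/q')$-Carleson, so with $s=1+p/q'$ the embedding $H^p\hookrightarrow L^s(d\nu_r)$ has norm $\mathcal{N}(\nu_r)\to 0$ as $r\to 1^-$ by (\ref{2.1}), while Lemma~\ref{Le4.1} with $\alpha=1/p+1/q'$ ensures $\int_{[0,1)}(1-t)^{-s'/q'}d\nu(t)<\infty$. Given $\{f_n\}$ as above and $g$ in the unit ball of $H^{q'}$, I would split the duality integral at a cutoff $r\in(0,1)$. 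Using (\ref{4.5}), (\ref{4.7}) and H\"{o}lder's inequality with conjugate exponents $s,s'$, the tail is dominated by
$$C\,\mathcal{N}(\nu_r)\|g\|_{H^{q'}}\sup_n\|f_n\|_{H^p}\left(\int_{[0,1)}(1-t)^{-s'/q'}d\nu(t)\right)^{1/s'},$$
hence made smaller than any prescribed $\varepsilon>0$ by taking $r$ close enough to $1$; for this now fixed $r$, the compact part is bounded by $C(1-r)^{-1/q'-1}\|g\|_{H^{q'}}\int_{[0,r)}|f_n|d\mu$, which vanishes as $n\to\infty$ by uniform convergence. Lemma~\ref{Le4.4} then concludes compactness. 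The main obstacle is keeping the two limits $r\to 1^-$ and $n\to\infty$ uniform in $g$, which is why the duality must precede the spatial cutoff and why both consequences of the strengthened hypothesis---the embedding smallness supplied by $\mathcal{N}(\nu_r)\to 0$ and the Lemma~\ref{Le4.1} integrability---are required.
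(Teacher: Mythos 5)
Your proposal is correct and follows exactly the route the paper intends: the paper's own ``proof'' merely refers back to Theorem \ref{Th4.5}(\romannumeral1) for sufficiency and to Theorems \ref{Th4.2}(\romannumeral2) and \ref{Th4.5}(\romannumeral1) for necessity, and your argument fills in those omitted details in the natural way, combining the H\"{o}lder step with exponents $s=1+p/q'$, $s'$ and Lemma \ref{Le4.1} from Theorem \ref{Th4.2} with the splitting-at-$r$, $\mathcal{N}(\nu_r)\to 0$ compactness scheme of Theorem \ref{Th4.5} and the same test functions $f_{a_n}$, $g_{a_n}$ for the necessity. No gaps beyond the level of detail the paper itself leaves implicit.
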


\begin{proof}
(\romannumeral1)The proof is the same as that of Theorem \ref{Th4.5}(\romannumeral1).We omit the details here.

(\romannumeral2)The proof is similar to that of Theorem \ref{Th4.2}(\romannumeral2) and Theorem \ref{Th4.5}(\romannumeral1). We omit the details here.
\end{proof}

Similarly, $\mathcal{DH}_\mu$ in $H^p(1\leq p \leq 2)$ also have a better conclusion.
\begin{theorem}
 Let $1 \leq p \leq 2$ and $\mu$ be a positive Borel measure on $[0,1)$ which
satisfies the condition in Theorem \ref{Th3.1}. Then $\mathcal{DH}_\mu$ is a compact operator in $H^p$ if and only if $\mu$ is a vanishing 2-Carleson measure.
\end{theorem}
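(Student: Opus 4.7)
The plan is to mirror the preceding boundedness theorem by combining known compactness criteria at the endpoints $p=1$ and $p=2$, and then interpolating between them.

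Both the necessity and the sufficiency at $p=1$ reduce to results already proved. For necessity, apply Theorem \ref{Th4.6}(\romannumeral2) with $q=p$ (or Theorem \ref{Th4.5}(\romannumeral2) when $p=1$); since $1/p+1/p'+1=2$, compactness of $\mathcal{DH}_\mu$ on $H^p$ forces $\mu$ to be a vanishing $2$-Carleson measure. For sufficiency at $p=1$, Theorem \ref{Th4.5}(\romannumeral2) with $p=q=1$ says directly that a vanishing $2$-Carleson measure yields compactness of $\mathcal{DH}_\mu$ on $H^1$.

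The genuinely new piece is sufficiency at $p=2$, since Theorem \ref{Th4.6}(\romannumeral1) would only provide it under the stronger vanishing $(2+\gamma)$-Carleson hypothesis. Here I would split $\mu=\mu|_{[0,r)}+\mu|_{[r,1)}$ for $r\in(0,1)$, take a bounded sequence $\{f_n\}\subset H^2$ converging to $0$ uniformly on compact subsets of $\mathbb{D}$, and control the two pieces separately. On $[0,r]$ the sequence $f_n$ converges to $0$ uniformly, so the Taylor expansion
\[
\mathcal{DH}_{\mu|_{[0,r)}}(f_n)(z)=\sum_{m=0}^\infty (m+1)z^m\int_{[0,r)}t^m f_n(t)\,d\mu(t),
\]
together with $t^m\le r^m$ and $\sum_m (m+1)^2 r^{2m}<\infty$, gives $\|\mathcal{DH}_{\mu|_{[0,r)}}(f_n)\|_{H^2}\to 0$ as $n\to\infty$ for each fixed $r$. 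For the far-field piece, the Hilbert-inequality computation embedded in the proof of the preceding boundedness theorem actually yields the quantitative bound $\|\mathcal{DH}_\nu\|_{H^2\to H^2}\le C\,M(\nu)$, where $M(\nu)$ denotes the $2$-Carleson constant of $\nu$. Since $\mu$ is vanishing $2$-Carleson, $M(\mu|_{[r,1)})\to 0$ as $r\to 1^-$, and combining with the boundedness of $\{f_n\}$ in $H^2$ yields
\[
\limsup_{n\to\infty}\|\mathcal{DH}_\mu(f_n)\|_{H^2}\le C\,M(\mu|_{[r,1)}),
\]
which can be made arbitrarily small by choosing $r$ close to $1$. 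Lemma \ref{Le4.4} then delivers compactness on $H^2$.

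For $1<p<2$, I would invoke the complex interpolation identity $H^p=(H^2,H^1)_\theta$ with $\theta=2/p-1$ already used in the boundedness part, together with a standard interpolation theorem for compact operators (for instance Cwikel's theorem): compactness at both endpoints transfers to every intermediate $H^p$. The main obstacle is the $p=2$ endpoint, since one cannot simply reuse Theorem \ref{Th4.6}(\romannumeral1); the splitting--approximation argument above, leaning on the sharp Hilbert-inequality bound internal to the preceding boundedness proof, is what closes the gap. A minor subtlety is that one must cite an interpolation theorem that preserves compactness rather than mere boundedness.
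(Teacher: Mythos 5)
Your proof is correct and follows the same overall architecture as the paper's: settle the endpoints $p=1$ and $p=2$ and pass to $1<p<2$ by complex interpolation, with necessity for all $p$ obtained from Theorem \ref{Th4.6}(\romannumeral2) (resp.\ Theorem \ref{Th4.5}(\romannumeral2)) at $q=p$, since $1/p+1/p'+1=2$. Where you genuinely diverge is the sufficiency at $p=2$. The paper works in the coefficient domain: vanishing $2$-Carleson gives $\mu_{n+k}=o\bigl((n+k+1)^{-2}\bigr)$, and the $H^2$-norm sum is split at a frequency index $N$, the tail being controlled by the classical Hilbert inequality times a small factor and the head by uniform convergence on compacts. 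You instead truncate the measure at radius $r$: the near piece $\mu|_{[0,r)}$ sends the sequence to $0$ in $H^2$ by direct coefficient estimates, and the far piece is handled by the quantitative bound $\|\mathcal{DH}_\nu\|_{H^2\to H^2}\le C\,M(\nu)$, $M(\nu)$ being the $2$-Carleson constant, extracted from the boundedness proof; this is legitimate, since $\nu_n\le 2M(\nu)/(n+1)^2$ with an absolute constant and one checks easily that the $2$-Carleson constant of $\mu|_{[r,1)}$ tends to $0$ as $r\to1^-$ when $\mu$ is vanishing $2$-Carleson. Both routes reduce to the Hilbert inequality plus Lemma \ref{Le4.4}; yours makes the dependence on the Carleson constant explicit, while the paper's avoids having to verify the two quantitative facts just mentioned. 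One citation caveat: for the intermediate exponents you should not appeal to ``Cwikel's theorem'', which concerns the real method; the compactness-interpolation result relevant to the complex couple $(H^2,H^1)$ is Theorem 10 of Cwikel--Kalton \cite{24}, which the paper invokes together with the reflexivity of $H^2$ — you correctly flag the need for such a theorem, but the precise reference matters here.
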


\begin{proof}
Firstly, let $p=1$, we know that $\mathcal{DH}_\mu$ is a compact operator in $H^1$ if and only if $\mu$ is a vanishing 2-Carleson measure by Theorem 5.1.

Next, let $p=2$, by  Theorem 5.2,  We only need to prove
if $\mu$ is a vanishing 2-Carleson measure then $\mathcal{DH}_\mu$ is a compact operator in $H^2$.

Assume that $\mu$  is a vanishing 2-Carleson measure and let $\{ f_j\}$ be a sequence of functions in $H^2$ with $ ||f_j||_{H^2} \leq 1$, for all $j$, and such that $f_j \rightarrow 0$, uniformly on compact subsets of $\mathbb{D}$.
Since $\mu$  is a vanishing 2-Carleson measure then $\mu_{n+k} = o (\frac{1}{(n+k+1)^2} )$, as $n \rightarrow \infty $. Say
$$\mu_{n,k}=\mu_{n+k}=\frac{\varepsilon_{n}}{(n+k+1)^2},\quad n=0,1,2,\ldots.$$
Then $\{\varepsilon_n\}\rightarrow0$. Say that, for every $j$,
$$f_j(z)=\sum_{k=0}^\infty a_k^{(j)}z^k,\quad z\in \mathbb{D}.$$
By using the classical Hilbert inequality, we have
\begin{align}
\sum_{n=0}^\infty \left|\sum_{k=0}^\infty \frac{a^{(j)}_k}{n+k+1} \right|^2 \leq \pi^2\sum_{k=0}^\infty |{a^{(j)}_k}|^2\leq \pi^2.
\end{align}
Take $\varepsilon>0$ and next take a natural number $N$ such that
$$n\geq N \quad\Rightarrow  \quad \varepsilon_n^2 < \frac{\varepsilon}{2\pi^2}.$$
We have
\begin{align}\label{4.2321}
||\mathcal{DH}_\mu(f_j)||_{H^2}^2& =\sum_{n=0}^\infty (n+1)^2 \left|\sum_{k=0}^\infty \mu_{n,k}a^{(j)}_k \right|^2 \notag \\
& = \sum_{n=0}^N (n+1)^2 \left|\sum_{k=0}^\infty \mu_{n,k}a^{(j)}_k \right|^2+\sum_{n=N+1}^\infty (n+1)^2 \left|\sum_{k=0}^\infty \mu_{n,k}a^{(j)}_k \right|^2 \notag \\
& \leq \sum_{n=0}^N (n+1)^2 \left|\sum_{k=0}^\infty \mu_{n,k}a^{(j)}_k \right|^2+\sum_{n=0}^\infty (n+1)^2 \left|\sum_{k=0}^\infty \frac{\varepsilon_na^{(j)}_k}{(n+k+1)^2} \right|^2 \notag \\
& \leq \sum_{n=0}^N (n+1)^2 \left|\sum_{k=0}^\infty \mu_{n,k}a^{(j)}_k \right|^2+\frac{\varepsilon}{2\pi^2} \sum_{n=0}^\infty \left|\sum_{k=0}^\infty \frac{a^{(j)}_k}{n+k+1} \right|^2 \notag\\
&\leq \sum_{n=0}^N (n+1)^2 \left|\sum_{k=0}^\infty \mu_{n,k}a^{(j)}_k \right|^2 +\frac{\varepsilon}{2}.
\end{align}
Now, since $f_j \rightarrow 0$, uniformly on compact subsets of $\mathbb{D}$, it follows that
$$\sum_{n=0}^N (n+1)^2 \left|\sum_{k=0}^\infty \mu_{n,k}a^{(j)}_k \right|^2  \rightarrow0, \quad as \; j \rightarrow\infty.$$
Then it follows that that there exist $j_0 \in N$ such that $||\mathcal{DH}_\mu(f_j)||_{H^2}^2 <\varepsilon $ for all $j \geq j_0$. So, we have proved that $||\mathcal{DH}_\mu(f_j)||_{H^2}^2 \rightarrow 0 $. The compactness of $\mathcal{DH}_\mu$ on $H^2$ follows.

Since we have prove that when $p=1$, the compactness of $\mathcal{DH}_\mu$ on $H^1$. To deal with the cases $1 < p < 2$, we use again complex interpolation. Let  $1 < p < 2$ and $\mu$ be a vanishing 2-Carleson measure. Recall that
\begin{align}
 H^p=(H^2,H^1)_\theta, \quad if \; 1<p<2 \; and \; \theta=\frac{2}{p}-1. \notag
\end{align}

We have also that if $2 < s < \infty $ then
\begin{align}
 H^2=(H^s,H^1)_\alpha.  \notag
\end{align}
for a certain $\alpha \in (0, 1)$, namely, $\alpha = ( 1/2-1/s ) / (1 -1/s )$. Since $H^2$ is reflexive, and $\mathcal{DH}_\mu$ is compact from $H^2$ into itself and from $H^1$ into itself, Theorem 10 of \cite{24} gives that  $\mathcal{DH}_\mu$ is a compact operator in $H^p(1\leq p\leq2)$.
\end{proof}

\textbf{Conflicts of Interest}

The authors declare that there are no conflicts of interest regarding the publication of this paper.\\
\textbf{Availability of data and material}

The authors declare that all data and material in this paper are available.

\end{document}